\title[Gevrey regularity for quasi-geostrophic equation]{Gevrey regularity for the supercritical
 quasi-geostrophic equation}
\author{Animikh Biswas}
\address{Department of Mathematics \& Statistics, 
University of Maryland, Baltimore County,
1000 Hilltop Circle, Baltimore, MD - 21250, USA.}
\email{abiswas@umbc.edu}
\dedicatory{To my teacher Professor Ciprian Foias on the occasion of his eightieth birthday.}
\subjclass{Primary 35Q35; Secondary 35Q30; 76D05}
 \keywords{Quasigeostrophic Equations; Gevrey Regularity; Time Decay.}
 \thanks{This research was partly supported by NSF grant DMS11-09532}
\newtheorem{theorem}{Theorem}[section]
\newtheorem{lemma}[theorem]{Lemma}
\newtheorem{cor}[theorem]{Corollary}
\newtheorem{rem}{Remark}
\newcommand{\comments}[1]{}
\renewcommand{\phi}{\varphi}
\newcommand{\R}{\mathbb R}
\newcommand{\nn}{\nonumber}
\newcommand{\D}{\displaystyle }
\newcommand{\ka}{\kappa }
\newcommand{\Z}{\mathbb Z}
\newcommand{\cal}{\mathcal }
\newcommand{\cA}{\cal A}
\newcommand{\cf}{\cal F}
\newcommand{\ra}{\rightarrow}
\newcommand{\h}{\mathbb H}
\newcommand{\hh}{\dot{\h}}
\newcommand{\ds}{\frac{d}{ds}}
\renewcommand{\l}{\langle}
\renewcommand{\r}{\rangle}
\renewcommand{\dj}{\Delta_j}
\newcommand{\dk}{\Delta_k}
\newcommand{\tdk}{\widetilde{\Delta}_k}
\newcommand{\tdj}{\widetilde{\Delta}_j}
\newcommand{\sk}{S_k}
\newcommand{\sj}{S_j}
\newcommand{\es}{e^{\lambda\, {\D s^{\frac{\alpha}{\kappa}}\Lambda^{\alpha}}}}
\newcommand{\esx}{e^{\lambda {\D s^{\frac{\alpha}{\kappa}}|\xi|^{\alpha}}}}
\newcommand{\esxt}{e^{\lambda {\D s^{\frac{\alpha}{\kappa}}|\xi-\eta|^{\alpha}}}}
\newcommand{\estt}{e^{\lambda {\D s^{\frac{\alpha}{\kappa}}\tau^\alpha|\eta|^{\alpha}}}}
\newcommand{\esttt}{e^{\lambda {\D s^{\frac{\alpha}{\kappa}}|\eta|^{\alpha}}}}
\newcommand{\T}{\widetilde{\theta}}
\newcommand{\U}{\widetilde{u}}
\newcommand{\tn}{\T^{(n)}}
\newcommand{\un}{\U^{(n)}}
\newcommand{\tno}{\T^{(n+1)}}
\def\esssup{\text{ess sup}}
\def\intav#1{\mathchoice
          {\mathop{\vrule width 6pt height 3 pt depth -2.5pt
                  \kern -9pt \intop}\nolimits_{\kern -6pt#1}}%
          {\mathop{\vrule width 5pt height 3 pt depth -2.6pt
                  \kern -6pt \intop}\nolimits_{#1}}%
          {\mathop{\vrule width 5pt height 3 pt depth -2.6pt
                  \kern -6pt \intop}\nolimits_{#1}}%
          {\mathop{\vrule width 5pt height 3 pt depth -2.6pt
                  \kern -6pt \intop}\nolimits_{#1}}}
\newcommand{\charfn}[1]{{\raisebox{1.2pt}{\mbox{$\chi
_{\kern-1pt\lower3pt\hbox{{$\scriptstyle{#1}$}}}$}}}}
\date{}
\begin{document}
\begin{abstract}
In this paper, following the techniques of Foias and Temam,
we establish suitable Gevrey class regularity of solutions to the supercritical
  quasi-geostrophic equations in the whole space, with initial data in ``critical" Sobolev spaces. 
Moreover, the Gevrey class that we obtain is ``near optimal" and
as a corollary,
we  obtain temporal decay rates of higher order Sobolev norms of the solutions.
Unlike the Navier-Stokes  or the subcritical quasi-geostrophic equations,
the low dissipation poses a difficulty in establishing Gevrey regularity.
A new commutator estimate in Gevrey classes, involving the dyadic Littlewood-Paley operators, is 
established that allow us to exploit the cancellation properties of the equation and circumvent this difficulty.
\end{abstract}
\maketitle
\section{Introduction}
We consider
the dissipative, two-dimensional (surface) quasi-gesotrophic equation (henceforth, QG) on 
$\R^2 \times (0,\infty)$ given by 
\begin{gather}    \label{QG}
\left.
\begin{array}{l}
\partial_t \theta + \Lambda^\ka \theta - u\cdot \nabla \theta =0,
\theta(0)=\theta_0,\\
u = (-R_2\theta,R_1\theta), R_i = \partial_i\Lambda^{-1}, i=1,2.
\end{array}
\right\}
\end{gather}
Here $u$ is the velocity field, $\theta$ is the temperature, the operator $\Lambda = (-\Delta)^{1/2}$
with $\Delta $ denoting the Laplacian and
the operators $R_i$  are the usual Riesz transforms. The cases $\kappa >1, \kappa =1$ and $0<\kappa <1$ are known as the subcritical, critical and supercritical cases respectively.
The  QG arises in geophysics and meteorology (see, for instance \cite{const,constmajtab, Pedlosky}).
Moreover, the critical QG is the dimensional analogue of the three dimensional Navier-Stokes equations.
 This equation has received considerable attention recently; 
see \cite{resnick, cwu, cordoba2004, Chen},
 and the references therein. 

In this paper, we establish higher order (Gevrey class) regularity of solutions to the  supercritical QG in the whole space $\R^2$, with initial data in the critical Sobolev space $\h^{2-\kappa}$. Although such results are known for the subcritical and critical QG \cite{Dong, Dong2},  to the best of our knowledge this is the first such result for the supercritical case. We
follow the Gevrey class technique introduced in  the seminal work of Foias and Temam \cite{ft} for the Navier-Stokes equations, where they established analyticity, and provided explicit estimates of the  analyticity radius, of solutions to the Navier-Stokes equations. In their 
 approach, one avoids cumbersome recursive estimation of higher order derivatives and intricate combinatorial arguments. 
 Since its introduction, 
 Gevrey class technique has become a standard
 tool for studying analyticity properties of solutions for a wide class of dissipative equations 
and in various functional spaces
 (see \cite{GK, cao1999navier, ferrari1998gevrey, biswas2007existence, biswas2010navier, biswas2012} and the references therein). 
 In \cite{oliver2000remark}, and subsequently in \cite{biswas}, it was shown how Gevrey norm estimates can be used  to derive sharp bounds for the (time) decay of higher order
derivatives of solutions to a wide class of dissipative equations including the Navier-Stokes equations. 
Other approaches to analyticity and higher order regularity can be found in \cite{pav, miura2006, guber} for the 3D NSE and \cite{Dong2} for the subcritical surface quasi-geostrophic equation. 

The subcritical QG is fairly well understood; it possesses a globally regular (even, analytic) solution 
for adequate initial data (see \cite{cwu, Carrillo, Dong2}), as well as 
a compact global attractor \cite{ju2005}. However, relatively less is known concerning
 the critical ($\kappa=1$) and
 the supercritical ($\kappa <1$) QG.  Concerning the critical case, 
the authors in \cite{ccw} proved the existence of unique, globally regular solution for small initial data in
$L^\infty$. The global well-posedness for the critical QG,  for initial data
of arbitrary size, was solved independently in recent works \cite{Caffarelli, Kiselev}; 
 subsequently they were generalized to include initial data in larger functional spaces  
\cite{Abidi, Wang}. In \cite{constvicol}), an  alternative proof of global well-posedness 
was found using a ``nonlocal maximum principle".
The supercritical case also received considerable 
attention of late, although less is known about it. It has been shown that it is {\em locally} well posed for initial data of arbitrary size in appropriate functional spaces while being globally well-posed for sufficiently {\em small initial data } in adequate functional spaces 
(see \cite{Chae, miura, Ju2004, Ju2006, Chen, Hmidi, Dong3} and the references therein). 
Although the global well posedness for arbitrary initial data is still open (as of this writing) for the supercrtical  QG, a regularity criterion for solutions has been establsished \cite{cwuholder} and \cite{Dong3} and eventual regularity has been addressed recently in \cite{dabkowsky}.

Recall that $f$ is said to belong to $L^2-$based Gevrey class $G_\alpha$ if 
\begin{gather}   \label{Gevreyclass}
\|f\|_{\h^{n}} \le C_f \left(\frac{n!}{\rho^n}\right)^{\frac{1}{\alpha}},
\end{gather}
where $\h^n$ denotes the usual Sobolev space of order $n$.
This can be characterized by the finiteness of the exponential norm 
$\|e^{\rho' \Lambda^{\alpha}}f\|_{L^2}$ for all $\rho' < \rho$. 
If $\alpha=1$, then $f$ is analytic with (uniform) analyticity radius $\rho$
(see \cite{levermore, oliver2000remark}) while $\alpha<1$ corresponds to sub-analytic Gevrey classes. 
We show that for the supercritical QG in the whole space $\R^2$,
and for sufficiently small initial data  in  $\h^{2-\kappa}$, a solution to \eqref{QG}
exists which moreover satisfies satisfying $\sup_{t>0}\|e^{\rho(t)\Lambda^{\alpha}}\theta(t)\|_{\hh^{2-\kappa}} < \infty $. Here $\alpha < \kappa \le 1$ and $\rho(\cdot)$ is an adequate function. 
This immediately implies a higher order decay estimate similar to \eqref{Gevreyclass}. 
The result also holds locally for arbitrary initial data. As noted in Remark \ref{rem:optimal}, 
the  Gevrey class we obtain is ``near optimal" and moreover,
 our result includes as corollary the higher regularity and decay results established in \cite{Dong, Dong3} with sharper constants (see Remark \ref{rem:dongcomparison}).

\comments{Unlike in \cite{ft}, where they considered the periodic case for the Navier-Stokes equations, 
we focus on the whole space setting, although 
our method applies equally well to the space periodic case.
It should be noted that whether or not a solution to the critical QG is analytic, was left as an open problem in \cite{Dong}. We showed in \cite{biswas} that for small initial data in a subclass of $L^\infty$ (more precisely, those with $\|\cf \theta\|_{L^1}$ sufficiently small, where $\cf$ denotes the Fourier transform), the solution is globally analytic. However, there we did not have a corresponding analyticity result for arbitrary initial data. Here, we show that for initial data in the aforementioned Sobolev space, the solutions to the critical and supercritical QG (constructed for instance in \cite{miura}) belong to suitable (subanalytic) Gevrey classes; }

The idea of the proof follows that of \cite{weis1980} and \cite{fujita1964navier} for the Navier-Stokes equations, suitably modified for Gevrey classes. A crucial step in establishing Gevrey regularity for the Navier-Stokes or the subcritical QG (see \cite{biswas2010navier, biswas}) is to obtain an estimate in Gevrey classes
of the form (in 2D)
\[
\|e^{\lambda \Lambda^{\alpha}}(f g)\|_{\hh^{\zeta}} \le C \|e^{\lambda \Lambda^{\alpha}}(f )\|_{\hh^{\zeta_1}}\|e^{\lambda \Lambda^{\alpha}}(g)\|_{\hh^{\zeta_2}}, \  \zeta=\zeta_1+\zeta_2-1,
\zeta_1,\zeta_2  < 1, \zeta_1+\zeta_2 >0,
\]
where $\hh^{\zeta}$ denotes homogeneous Sobolev spaces.
This can be derived from the corresponding inequality in Sobolev spaces, namely,
\[
\|f g\|_{\hh^{\zeta}} \le C \|f \|_{\hh^{\zeta_1}}\|g\|_{\hh^{\zeta_2}}, 
\zeta=\zeta_1+\zeta_2-1, \zeta_1,\zeta_2  < 1, \zeta_1+\zeta_2 >0.
\]
However,
 unlike the Navier-Stokes equations or the subcritical QG, due to the the low dissipation in the  supercritical case, one has to work in higher regularity spaces, namely $\h^{\delta}, \delta >1$, for well-posedness. An inequality of the above type does not hold in general in such spaces (as $\zeta_1,\zeta_2$ will have to be taken larger than $1$). This  poses a hurdle in establishing Gevrey regularity. To get around this difficulty, we establish in Theorem \ref{thm:commutest}
  a new   (to the best of our knowledge)  commutator estimate in Gevrey classes involving the dyadic Littlewood-Paley operators, which may be of independent interest as well.
This  commutator estimate allows us to exploit the cancellation properties of the equation to get around 
the  challenges posed by low dissipation.
 Usually,  if one works in Gevrey classes, one looses the cancellation properties of the equation which is available in $L^2$  spaces; see however the work in \cite{levermore, kukvic} where a certain cancellation property  in the analytic Gevrey class 
was also used to establish analyticity estimate for the space periodic Euler equation.
 Our technique can be generalized to initial data in critical (and noncritical) Besov spaces. Due to the lack of Hilbert space structure as well as the Plancherel theorem, the corresponding estimates are more involved and will be presented in a future work.

The organization of the paper is as follows. In Section 2, we state our main results; in Section 3, we develop the requisite notation and background material while Sections 4 and 5 are devoted to the proof of the main results.

\section{Main Results}  \label{sec:2}
 Denote by $\Delta $ the Laplacian and by 
$\Lambda =(-\Delta)^{1/2}$. For notational parsimony, we will denote $\|f\|_{L^2}=\|f\|$.
The Sobolev and the homogeneous Sobolev spaces on $\R^2$ are respectively denoted by $\h^m$ and $\hh^m, m \in \R$. 
Recall that the corresponding norms are given by 
\begin{gather*}
\|f\|_{\h^m} = \|(I+\Lambda)^mf\|\ \mbox{and}\ \|f\|_{\hh^m}=\|\Lambda^mf\|,
\quad m \in \R.
\end{gather*}
Recall that by Plancherel theorem,
\begin{gather*}
\|f\|_{\h^m}=\left(\int (1+|\xi|)^{2m}|\cf f(\xi)|^2\, d\xi\right)^{1/2}\ \mbox{and}\
\|f\|_{\hh^m}=\left(\int |\xi|^{2m}|\cf f(\xi)|^2\, d\xi\right)^{1/2};
\end{gather*}
here, and henceforth, $\cf$ denotes the Fourier transform.
As is well known, the Sobolev spaces are Hilbert spaces for all $m$. The homogeneous Sobolev spaces on the other hand are Hilbert spaces for $m <1$, while they are normed inner product spaces (but not complete) for all $m \ge 1$ (see \cite{danchinfourier} , \cite{bahourifourier}).

{\bf Gevrey Norms:}
Let $0\le \alpha \le 1$. We denote the Gevrey norms by
\begin{gather*} 
\|f\|_{G(s)}=\|\es f\| \mbox{and}\  
\|f\|_{G(s), \hh^m}= \|\es f\|_{\hh^m},\quad (\lambda >0\ \mbox{fixed}\,).
\end{gather*}
The Gevrey norms are characterized by the decay rates of higher order derivatives, namely, 
if $\|f\|_{G(s), \hh^m} < \infty $ for some $m \in \R$, then we have the higher derivative estimates 
\begin{gather}  \label{gevreydecay}
\|f\|_{\hh^{m+n}} \le \left(\frac{n!}{\rho^n}\right)^{\frac{1}{\alpha}}\|f\|_{G(s), \hh^m}\ 
\mbox{where}\ \rho = \lambda \alpha s^{\frac{\alpha}{\kappa}}\ \mbox{and}\ n \, \in \, {\mathbb N}.
\end{gather}
In particular, when $\alpha =1$,  $f$ in (\ref{gevreydecay}) is  analytic with (uniform) analyticity radius $\rho$,
while for $\alpha < 1$ the corresponding functional classes are referred to as subanalytic gevrey classes.
For  the above mentioned facts including  (\ref{gevreydecay}), see Theorem 4 in \cite{levermore} and Theorem 5 in \cite{oliver2000remark}.

\begin{rem}
{\em 
The indices of $s$ appearing in the definition of the Gevrey norms and in  inequality (\ref{maincommutest}) below, 
allow us to establish global Gevrey regularity result for small data in the whole space. They are dictated by the scaling properties of the equation. It is not possible (at least, in our work) to establish such global results unless they are
 precisley of that form. The global Gevrey regularity result in turn enables us to establish decay result for higher derivatives as given in Corollary \ref{cor:decay} below.
}
\end{rem}

We now describe  our main results.
The first one is a commutator estimate involving Gevrey norms which may be of independent interest. The second, which employs the first in its proof, concerns Gevrey regularity of solutions of the critical and subcritical quasi-geostrophic equations.
\subsection{A Commutator estimate in Gevrey classes}
The commutator of two operators is defined as 
\begin{gather*}
[A,B]=AB-BA. 
\end{gather*}
The estimate for the commutator $[f,\es \dj]$, where $\Delta_j$ denotes the (homogeneous) dyadic Littlewood-Paley operator, 
 is crucial for our work. 
\begin{theorem}  \label{thm:commutest}
\comments{Let $0\le \delta_1, \delta_2 <1, \delta_1+\delta_2>0$ and}
 Let $f,g \in L^2$ with $\es f \in \hh^{1+\delta_1}, \es g \in \hh^{\delta_2}$ and 
\begin{gather}  \label{paramrestr}
\min\{\zeta , \delta_1, \delta_2 \}> 0, \delta_1+\zeta <1,  \delta_2 <1\ \mbox{and}\ \zeta < \alpha.
\end{gather}
Denote by $\dj, j \in \Z $ the dyadic (homogeneous)  Littlewood-Paley  operators.
There exists a constant $C$, independent of
$j, s, f$ and $g$, and a sequence of constants $\{c_j\}_{j\in \Z}$ (which may depend, in addition, on 
$ s, f$ and $g$) satisfying
$c_j \ge 0$ and $\sum_j c_j^2 \le 1$, such that
\begin{align}  
& \|[f,\es \dj]g\| \le  \nn \\
& Cc_j\| \es f\|_{\hh^{1+\delta_1}}\| \es g\|_{\hh^{\delta_2}}
\left\{s^{\frac{(\alpha-\zeta)}{\kappa}} 2^{-(\delta_1+\delta_2+\zeta-\alpha)j} 
+  2^{-(\delta_1+\delta_2)j} 
\right\}. \label{maincommutest}
\end{align}
\end{theorem}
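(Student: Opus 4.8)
The plan is to pass to the frequency side and treat the commutator as a Fourier multiplier acting on a product. Writing $\psi$ for the radial bump such that $\dj$ has symbol $\psi(2^{-j}\xi)$, and setting $\mu=\lambda s^{\frac{\alpha}{\kappa}}$, the operator $\es\dj$ has symbol $m_j(\xi)=e^{\mu|\xi|^{\alpha}}\psi(2^{-j}\xi)$. Since $[f,\es\dj]g=f\,(\es\dj g)-\es\dj(fg)$, the convolution theorem gives
\[
\cf\big([f,\es\dj]g\big)(\xi)=\int \big[m_j(\eta)-m_j(\xi)\big]\,\cf f(\xi-\eta)\,\cf g(\eta)\,d\eta ,
\]
so the whole problem reduces to understanding the kernel $m_j(\eta)-m_j(\xi)$ and bounding the resulting bilinear form in $L^2_\xi$. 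First I would convert the Sobolev data into the Gevrey norms on the right-hand side: multiplying and dividing by $\esxt$ and $\esttt$ replaces $\cf f(\xi-\eta),\ \cf g(\eta)$ by $\cf(\es f)(\xi-\eta),\ \cf(\es g)(\eta)$ at the cost of the factor $e^{-\mu|\xi-\eta|^{\alpha}}e^{-\mu|\eta|^{\alpha}}$ landing on the kernel. Using the subadditivity $|\xi|^{\alpha}\le|\xi-\eta|^{\alpha}+|\eta|^{\alpha}$ for $0\le\alpha\le1$ (the Foias--Temam device), the weighted kernel splits cleanly as
\[
e^{-\mu|\xi-\eta|^{\alpha}}\big[\psi(2^{-j}\eta)-\psi(2^{-j}\xi)\big]
+\psi(2^{-j}\xi)\,e^{-\mu|\xi-\eta|^{\alpha}}\big[1-e^{\mu(|\xi|^{\alpha}-|\eta|^{\alpha})}\big].
\]
The first (Littlewood--Paley) piece carries no $s$-dependence and will produce the term $2^{-(\delta_1+\delta_2)j}$, while the second (Gevrey) piece encodes the cancellation of the exponential and will produce $s^{\frac{\alpha-\zeta}{\kappa}}2^{-(\delta_1+\delta_2+\zeta-\alpha)j}$.

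For the first piece I would use the mean value theorem on the fixed bump, $|\psi(2^{-j}\eta)-\psi(2^{-j}\xi)|\le\|\nabla\psi\|_{\infty}\,2^{-j}|\xi-\eta|$, together with $e^{-\mu|\xi-\eta|^{\alpha}}\le1$, so on the relevant annulus the kernel is dominated by $2^{-j}|\xi-\eta|$. For the second piece the two elementary inequalities that do the work are the Hölder continuity $\big||\xi|^{\alpha}-|\eta|^{\alpha}\big|\le|\xi-\eta|^{\alpha}$ and the interpolated bound $|1-e^{y}|\le C_\theta|y|^{\theta}e^{|y|}$ for $0\le\theta\le1$. Choosing $\theta=1-\zeta/\alpha\in(0,1)$, which is legitimate precisely because $\zeta<\alpha$, and absorbing the surviving $e^{-\mu|\xi-\eta|^{\alpha}}$, the exponential collapses and the kernel is dominated by $C\,s^{\frac{\alpha-\zeta}{\kappa}}|\xi-\eta|^{\alpha-\zeta}$ on $\{|\xi|\sim2^{j}\}$.

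Next I would normalize: set $\Phi(\zeta)=|\zeta|^{1+\delta_1}|\cf(\es f)(\zeta)|$ and $\Psi(\zeta)=|\zeta|^{\delta_2}|\cf(\es g)(\zeta)|$, so that $\|\Phi\|_{L^2}=\|\es f\|_{\hh^{1+\delta_1}}$, $\|\Psi\|_{L^2}=\|\es g\|_{\hh^{\delta_2}}$, and substitute $|\cf(\es f)(\xi-\eta)|=|\xi-\eta|^{-(1+\delta_1)}\Phi(\xi-\eta)$, $|\cf(\es g)(\eta)|=|\eta|^{-\delta_2}\Psi(\eta)$. Both pieces are then reduced to bilinear convolutions of the form $\mathbf 1_{\{|\xi|\sim2^{j}\}}\big(|\cdot|^{p}\Phi * |\cdot|^{-\delta_2}\Psi\big)$ with $p=-\delta_1$ for the first piece (with an explicit extra $2^{-j}$ out front) and $p=\alpha-\zeta-1-\delta_1$ for the second. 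Estimating these by decomposing $f$ and $g$ into Littlewood--Paley pieces and summing the dyadic interactions (low--high, high--low, high--high) compatible with output frequency $\sim2^{j}$, a scaling count shows each such convolution localized to $|\xi|\sim2^{j}$ contributes $2^{(p-\delta_2+1)j}$, i.e. $2^{(1-\delta_1-\delta_2)j}$ and $2^{(\alpha-\zeta-\delta_1-\delta_2)j}$ respectively; multiplying the first by its extra $2^{-j}$ recovers the two claimed powers. The square-summable sequence $\{c_j\}$ with $\sum_j c_j^2\le1$ emerges from the almost-orthogonality of the dyadic pieces, equivalently from Young's inequality for the resulting discrete convolutions after dividing out the two $L^2$ norms.

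The main obstacle is the Gevrey piece. Naively cancelling the entire exponential would leave $s^{\frac{\alpha}{\kappa}}|\xi-\eta|^{\alpha}$, i.e. effectively $\alpha$ derivatives thrown onto $f$; this is both too costly in regularity and the wrong $s$--$2^{j}$ balance for the later global-in-time argument (cf.\ the Remark on the indices of $s$). The interpolation exponent $\theta=1-\zeta/\alpha$ is exactly what trades a factor $s^{\zeta/\kappa}$ for $\zeta$ derivatives of regularity, and the constraints $\min\{\zeta,\delta_1,\delta_2\}>0$, $\delta_1+\zeta<1$ and $\delta_2<1$ in \eqref{paramrestr} are precisely what keep the dyadic sums in the bilinear step convergent (local integrability near the origin and at the diagonal, and geometric summability of the low-- and high--frequency tails). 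Getting this balance right, rather than the bilinear convolution bookkeeping, is where the real work lies.
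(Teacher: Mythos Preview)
Your overall strategy---split the symbol difference into a Littlewood--Paley part and a Gevrey part, and use the interpolated bound $|1-e^y|\le C_\theta|y|^\theta e^{|y|}$ with $\theta=1-\zeta/\alpha$ to extract the correct $s$-power---is appealing, and the second idea is a genuine alternative to the paper's device. But there are two real gaps.

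First, the output of $[f,\es\dj]g$ is \emph{not} frequency-localized to $\{|\xi|\sim 2^j\}$. Your second piece carries the factor $\psi(2^{-j}\xi)$ and is localized, but your first piece $e^{-\mu|\xi-\eta|^\alpha}[\psi(2^{-j}\eta)-\psi(2^{-j}\xi)]$ is not: when $|\eta|\sim 2^j$ and $|\xi|$ is far from $2^j$, the integrand survives. So the assertion that ``both pieces are reduced to bilinear convolutions of the form $\mathbf 1_{\{|\xi|\sim 2^j\}}(\cdots)$'' is false as stated. The paper handles exactly this by first writing the Bony decomposition of the commutator, isolating $[T_f,\es\dj]g$ (which \emph{is} localized near $|\xi|\sim 2^j$) as the main term and treating the remaining pieces $T_{\es\dj g}f$, $\es\dj(T_gf)$, $R(f,\es\dj g)$, $\es\dj R(f,g)$ separately by direct product estimates. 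You would need to do something equivalent.

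Second, and more seriously, your Gevrey-piece estimate does not close in the low--high interaction. Your bound $|1-e^{\mu(|\xi|^\alpha-|\eta|^\alpha)}|\le C\,\mu^{1-\zeta/\alpha}|\xi-\eta|^{\alpha-\zeta}e^{\mu|\xi-\eta|^\alpha}$ places the factor $|\xi-\eta|^{\alpha-\zeta}$ on the $f$-variable. Pairing against $\|\es f\|_{\hh^{1+\delta_1}}$ then effectively requires \eqref{convineq1} with $\zeta_1=1+\delta_1-(\alpha-\zeta)$; the constraint $\zeta_1<1$ becomes $\delta_1+\zeta<\alpha$, which is \emph{not} assumed in \eqref{paramrestr} (only $\delta_1+\zeta<1$) and in fact fails for the parameters used in the application to \eqref{QG}. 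A direct dyadic check confirms this: in the low--high block ($|\xi-\eta|\sim 2^k$, $|\eta|\sim|\xi|\sim 2^j$, $k\ll j$) one obtains $\sum_{k<j}2^{(\alpha-\zeta-\delta_1)k}\|\dk\es f\|_{\hh^{1+\delta_1}}$, which cannot be bounded by $C\,2^{(\alpha-\zeta-\delta_1)j}\|\es f\|_{\hh^{1+\delta_1}}$ uniformly in $j$ once $\alpha-\zeta-\delta_1<0$. The paper avoids this by working inside the paraproduct: there $|\eta|\le 2^{k-3}\ll|\xi|$, so the integral representation of $\esx-\esxt$ together with the geometric bound $|\xi-(1-\tau)\eta|\ge\tfrac{2}{3}|\xi|$ places the factor $|\xi|^{-(1-\alpha)}\sim 2^{-(1-\alpha)j}$ on the \emph{output} frequency; the surplus $\zeta$ derivatives on $f$ are then traded for $s^{-\zeta/\kappa}$ via the semigroup estimate $\|\Lambda^\zeta e^{-t\Lambda^\alpha}\|\le Ct^{-\zeta/\alpha}$, with $\zeta<\alpha$ ensuring the $\tau$-integral converges. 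Your interpolation idea can be made to work inside the paraproduct as well (use $\big||\xi|^\alpha-|\eta|^\alpha\big|\le C|\xi|^{\alpha-1}|\xi-\eta|$ there, rather than $|\xi-\eta|^\alpha$), but not on the full commutator as you have it.
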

For definition of $\dj, \sj$ in  Theorem \ref{thm:commutest}, see Section 3 below.
\subsection{Gevrey Regularity for the quasi-geostrophic equations} 
Here we will consider only the critical and super-critical cases, i.e., $0<\kappa \le 1$. 
For $\beta>0$ and a measurable function $\Theta:(0,T) \ra \hh^{2-\kappa+\beta} $, we denote
\begin{gather}  \label{pathnormdef}
\|\Theta(\cdot)\|_{E_T}:= 
= \esssup_{0<s<T}\,s^{\frac{\beta}{\kappa}}\|\es \Theta(s)\|_{\hh^{2-\kappa+\beta}},
\end{gather}
provided the right hand side is finite.
\begin{theorem}  \label{thm:main}
Let $ \kappa \le 1, \alpha <\kappa $ and $\theta_0 \in \h^{2-\kappa}$. 
There exist $\beta>0$ and $T>0$ and a solution $\theta(\cdot)$ on $[0,T]$ of  (\ref{QG})  such that 
\begin{gather*}
\|\theta(\cdot)\|_{E_T}\comments{:= \sup_{0<t<T}\max\{t^{\frac{\beta}{\kappa}}\|\theta(t)\|_{G(t),\hh^{2-\kappa+\beta}}, \|\theta(t)\|_{G(t),\hh^{2-\kappa}} \}}\le C\|\theta_0\|_{\h^{2-\kappa}},
\end{gather*}
where the constant $C$ is independent of $\theta_0$ and $T$. Furthermore,
 in case $\|\theta_0\|_{\hh^{2-\kappa}}$ is adequately small, 
we can take $T= \infty$.
\end{theorem}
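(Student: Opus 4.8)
The plan is to realize $\theta$ as a fixed point of the mild (Duhamel) formulation of \eqref{QG}. Since $u=(-R_2\theta,R_1\theta)$ is divergence free, the nonlinearity is in divergence form, $u\cdot\nabla\theta=\nabla\cdot(u\,\theta)$, so a solution is a fixed point of
\[
\theta=\cA\theta_0+B(\theta,\theta),\qquad \cA\theta_0(t)=e^{-t\Lambda^\kappa}\theta_0,
\]
\[
B(\theta,\T)(t)=\int_0^t e^{-(t-s)\Lambda^\kappa}\,\nabla\cdot\big(u(s)\,\T(s)\big)\,ds,\quad u=(-R_2\theta,R_1\theta).
\]
I would run the Picard iteration in the complete metric space given by the norm $\|\cdot\|_{E_T}$ of \eqref{pathnormdef} (in practice in the slightly larger space that, in addition, controls $\sup_{0<t<T}\|\es\theta(t)\|_{\hh^{2-\kappa}}$, the quantity needed to recover the datum in $\h^{2-\kappa}$), for a suitably small $\beta>0$. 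Once the linear bound $\|\cA\theta_0\|_{E_T}\le C\|\theta_0\|_{\h^{2-\kappa}}$ and the bilinear bound $\|B(\theta,\T)\|_{E_T}\le C\|\theta\|_{E_T}\|\T\|_{E_T}$ are in hand, the standard contraction lemma produces the asserted solution.

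First I would prove the linear estimate. On the Fourier side, bounding $s^{\beta/\kappa}\|\es\,\cA\theta_0(s)\|_{\hh^{2-\kappa+\beta}}$ reduces to a uniform pointwise estimate on the multiplier $s^{\beta/\kappa}|\xi|^{\beta}e^{\lambda s^{\alpha/\kappa}|\xi|^{\alpha}}e^{-s|\xi|^{\kappa}}$. The substitution $\mu=s^{1/\kappa}|\xi|$ turns this into $\mu^{\beta}e^{\lambda\mu^{\alpha}-\mu^{\kappa}}$, which is bounded on $[0,\infty)$ precisely because $\alpha<\kappa$ (a Young-type inequality absorbing the Gevrey growth into the dissipation). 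This is exactly the computation that forces the weight $s^{\alpha/\kappa}$ in the Gevrey exponent and $s^{\beta/\kappa}$ in \eqref{pathnormdef}, and it is uniform in $T$; hence it yields the global (small-data) bound directly, while for arbitrary data one obtains smallness of $\|\cA\theta_0\|_{E_T}$ as $T\to0$ by splitting $\theta_0$ into a smooth part and a part of small $\hh^{2-\kappa}$ norm.

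The heart of the argument is the bilinear estimate, which I would arrange to be scale invariant so that no positive power of $T$ is incurred (this is what permits $T=\infty$ for small data). To estimate $e^{\lambda t^{\alpha/\kappa}\Lambda^\alpha}B(\theta,\T)(t)$ in $\hh^{2-\kappa+\beta}$ I would factor the propagator as
\[
e^{\lambda t^{\alpha/\kappa}\Lambda^\alpha}e^{-(t-s)\Lambda^\kappa}
=\Big(e^{\lambda(t^{\alpha/\kappa}-s^{\alpha/\kappa})\Lambda^\alpha}e^{-(t-s)\Lambda^\kappa}\Big)\,\es .
\]
Using the subadditivity $t^{\alpha/\kappa}-s^{\alpha/\kappa}\le(t-s)^{\alpha/\kappa}$ (valid since $\alpha<\kappa$) together with the same Young-type bound, the parenthesised operator, augmented by the one derivative from $\nabla\cdot$ and the order $2-\kappa+\beta$ of the target norm, is smoothing of the appropriate order with operator norm $\le C(t-s)^{-\gamma/\kappa}$ for a suitable $\gamma<\kappa$. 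The problem is thereby reduced to a Gevrey product estimate for $\|\es\big(u(s)\,\T(s)\big)\|_{\hh^{\sigma}}$ in a space $\hh^{\sigma}$ with $\sigma$ typically exceeding $1$, where the elementary product inequality recalled in the introduction fails.

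This is where Theorem \ref{thm:commutest} is indispensable and is the main obstacle. I would decompose with the Littlewood--Paley operators and split, via the commutator identity,
\[
\es(u\,\T)=u\,\es\T-\sum_{j}[u,\es\dj]\T .
\]
The leading term $u\,\es\T$ carries the Gevrey weight on a single factor and can be handled by a commutator-free Moser/Kato--Ponce product estimate, controlling $u$ with no Gevrey weight through the Sobolev embedding $\hh^{2-\kappa}\hookrightarrow L^\infty$ available in the supercritical range (with the divergence-free structure preserving the cancellation). Each term of the error sum is estimated by \eqref{maincommutest} with $f=u$, $g=\T$, the parameters $\delta_1,\delta_2,\zeta$ chosen to satisfy \eqref{paramrestr} and to match $\sigma$, $\beta$ and the base regularity $2-\kappa$, so that $\es u\in\hh^{1+\delta_1}$ and $\es\T\in\hh^{\delta_2}$ are controlled by $\|\theta\|_{E_T},\|\T\|_{E_T}$ (using that the Riesz transforms commute with $\Lambda^\alpha$ and $\dj$). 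The almost-orthogonality $\sum_j c_j^2\le1$ then sums the dyadic blocks, and the factor $s^{(\alpha-\zeta)/\kappa}$ in \eqref{maincommutest} supplies precisely the power of $s$ needed so that, after inserting the two $E_T$ weights $s^{-\beta/\kappa}$, the time integral $\int_0^t(t-s)^{-\gamma/\kappa}s^{-b}\,ds$ is a convergent Beta-function integral (with $\gamma/\kappa<1$ and a suitable $b<1$) whose resulting power of $t$ matches the $t^{\beta/\kappa}$ in \eqref{pathnormdef}. Feeding the linear and bilinear bounds into the contraction lemma yields $\theta$ on $[0,T]$ with $\|\theta\|_{E_T}\le C\|\theta_0\|_{\h^{2-\kappa}}$, and the $T$-uniformity of all constants upgrades this to $T=\infty$ when $\|\theta_0\|_{\hh^{2-\kappa}}$ is small.
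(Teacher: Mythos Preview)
Your mild-formulation approach has a genuine gap that the paper's argument is specifically designed to avoid. After the propagator absorbs $\gamma<\kappa$ derivatives, your bilinear estimate reduces to controlling $\|\es(u\,\T)\|_{\hh^{\sigma}}$ with $\sigma=3-\kappa+\beta-\gamma>3-2\kappa+\beta$, using only $\es u,\,\es\T\in\hh^{2-\kappa+\beta}$. For $\kappa\le1$ this asks for $\hh^{s}\times\hh^{s}\to\hh^{2s-1}$ with $s=2-\kappa+\beta>1$, which is exactly the product inequality the introduction flags as failing. Your splitting $\es(u\,\T)=u\,\es\T-\sum_j[u,\es\dj]\T$ does not cure this: the leading term $u\,\es\T$ must still be placed in $\hh^{\sigma}$ with $\sigma>2-\kappa+\beta$, and no $L^\infty$ or Kato--Ponce bound can manufacture that extra regularity from the inputs (and at the endpoint $\kappa=1$ the embedding $\hh^{1}\hookrightarrow L^\infty$ you invoke is false). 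In addition, Theorem~\ref{thm:commutest} requires $\delta_2<1$; with $g=\T$ you control $\es\T$ only in $\hh^{2-\kappa+\beta}$ (and $\hh^{2-\kappa}$), neither of which furnishes $\hh^{\delta_2}$ for any $\delta_2<1$ when $\kappa\le1$, so the commutator term cannot be fed into \eqref{maincommutest} as written either.

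The paper's remedy is to abandon Duhamel and run an $L^2$ energy estimate on each dyadic block $\dj\tno$ of the linear iteration $\partial_t\theta^{(n+1)}+\Lambda^\kappa\theta^{(n+1)}+u^{(n)}\!\cdot\!\nabla\theta^{(n+1)}=0$. Writing $\es\dj(u^{(n)}\!\cdot\!\nabla\theta^{(n+1)})=u^{(n)}\!\cdot\!\nabla\dj\tno-[u^{(n)},\es\dj]\nabla\theta^{(n+1)}$ and pairing with $\dj\tno$, the dangerous main term $\langle u^{(n)}\!\cdot\!\nabla\dj\tno,\dj\tno\rangle$ \emph{vanishes} by divergence-freeness; this cancellation has no counterpart in a pure Duhamel bound, and your parenthetical ``divergence-free structure preserving the cancellation'' has no mechanism behind it. Only the commutator survives, now applied with $g=\nabla\theta^{(n+1)}$ so that $\delta_1=\delta_2=1-\kappa+\beta<1$ is legitimate. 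A separate lemma (inequality \eqref{titiest}) handles the time derivative of the Gevrey weight; the resulting differential inequality for $\|\dj\tno\|$ is integrated by variation of parameters, weighted by $2^{(2-\kappa+\beta)j}$, summed in $\ell^2$ via $\sum c_j^2\le1$, and closed by induction on $n$.
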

\begin{rem}
{\em 
In case $\theta_0 \in \h^{2-\kappa +\epsilon}$ with $ \epsilon >0$, following the method presented here, we can provide an explicit estimate of $T$ in Theorem \ref{thm:main} above  in terms of $\|\theta_0\|_{\h^{2-\kappa+\epsilon}}$. However, in the critical space $\h^{2-\kappa}$ considered here, $T$ depends in a more complicated way on $\theta_0$, not just on its norm.}
\end{rem}
\begin{rem}   \label{rem:optimal}
{\em The Gevrey regularity result presented in Theorem \ref{thm:main} is ``near optimal" since the solution
in the linear case (i.e., when the nonlinearity in the quasi-geostrophic equation is not present)  belongs to the same Gevrey class with $\alpha=\kappa$, and no better. Though our result for the supercritical case is new, 
for the critical case $\kappa=1$, 
Theorem \ref{thm:main}  shows that for arbitrary initial data $\theta_0 \in \h^{1}$, 
the solutions are in all subanalytic Gevrey classes. 
In \cite{biswas} we showed that the solution to the critical quasi-geostrophic equation
is analytic if $\|\cf \theta_0\|_{L^1}$ is sufficiently small; see also \cite{shterenberg} for a similar result in fractal burgers equation. Thus,
 it would be interesting to see if one can obtain the optimal Gevrey class regularity (i.e., $\alpha = \kappa$ in Theorem \ref{thm:main}) for initial data in the critical Sobolev space $\h^{2-\kappa}$. 
}
\end{rem}
\begin{rem}
{\em Following our proof, it is not difficult to show that in Theorem \ref{thm:main}, the functiom $s \ra \es \theta(s)$ in fact belongs
to $C([0,T]; \h^{2-\kappa})$. Moreover,
the definition of the norm $\|\theta(\cdot)\|_{E_T}$ can be modified to
\begin{gather*}
\|\theta(\cdot)\|_{\widetilde{E_T}}:= \sup_{0<t<T}\max\{t^{\frac{\beta}{\kappa}}\|\theta(t)\|_{G(t),\hh^{2-\kappa+\beta}}, \|\theta(t)\|_{G(t),\hh^{2-\kappa}} \}.
\end{gather*}
The conclusions of Theorem \ref{thm:main} still hold with 
$\|\theta(\cdot)\|_{\widetilde{E_T}}$ in place of $\|\theta(\cdot)\|_{E_T}$. This method is inspired by the work of \cite{weis1980} and \cite{fujita1964navier} in case of the Navier-Stokes equations,
 where a higher order regularity gain due to dissipation
is used to control the critical norm.

}
\end{rem}
\begin{cor}   \label{cor:decay}
For any $n\,  \in \, {\mathbb N}$ satisfying $n > 2-\kappa$ and $\alpha < \kappa$, 
for some constant $C$ independent of $n$ and $s$,
the solution $\theta (\cdot)$ in Theorem \ref{thm:main} above satisfies the higher order decay estimates
\begin{gather}   \label{solutiondecay}
\|\Lambda^n \theta(s)\|_{\hh^{2-\kappa}} \le C\|\theta_0\|
\frac{(n!)^{\frac{1}{\alpha}}}{\rho^{\frac{n}{\alpha}}}\ 
\mbox{where}\  
\rho = \lambda \alpha s^{\frac{\alpha}{\kappa}}
\ \mbox{and}\ s\, \in\, (0,T).
\end{gather}
\end{cor}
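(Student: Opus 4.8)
The plan is to read the decay estimate \eqref{solutiondecay} directly off the Gevrey regularity already furnished by Theorem~\ref{thm:main}, using the general principle \eqref{gevreydecay} that a finite Gevrey norm forces factorial control of all higher derivatives. First I would rewrite the left–hand side as a single homogeneous Sobolev norm of the solution: since
\[
\|\Lambda^n\theta(s)\|_{\hh^{2-\kappa}} = \|\theta(s)\|_{\hh^{(2-\kappa)+n}},
\]
the quantity to be bounded is exactly the $\hh^{m+n}$ norm with base regularity $m=2-\kappa$, which is precisely the critical level at which Theorem~\ref{thm:main} controls the Gevrey norm. I would then apply \eqref{gevreydecay} with this $m$ and derivative order $n\in\mathbb N$ (the hypothesis $n>2-\kappa$ ensuring we are genuinely above the critical regularity), obtaining
\[
\|\theta(s)\|_{\hh^{(2-\kappa)+n}} \le \left(\frac{n!}{\rho^{\,n}}\right)^{\frac{1}{\alpha}} \|\theta(s)\|_{G(s),\hh^{2-\kappa}}, \qquad \rho = \lambda\alpha s^{\frac{\alpha}{\kappa}},
\]
which already exhibits the factor $(n!)^{1/\alpha}\rho^{-n/\alpha}$ appearing in \eqref{solutiondecay}. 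It remains only to bound the Gevrey prefactor $\|\theta(s)\|_{G(s),\hh^{2-\kappa}}=\|\es\theta(s)\|_{\hh^{2-\kappa}}$ uniformly in $s\in(0,T)$.

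This uniform bound is supplied verbatim by the modified formulation of Theorem~\ref{thm:main}: working with the norm $\|\cdot\|_{\widetilde{E_T}}$ one has
\[
\sup_{0<s<T}\|\theta(s)\|_{G(s),\hh^{2-\kappa}} \le \|\theta(\cdot)\|_{\widetilde{E_T}} \le C\|\theta_0\|_{\h^{2-\kappa}},
\]
with $C$ independent of $s$ and $T$, and the continuity of $s\mapsto\es\theta(s)$ noted in the preceding remark upgrades the essential supremum to a pointwise bound on all of $(0,T)$. Substituting this into the previous display yields \eqref{solutiondecay}, the constant $C$ absorbing the passage from the critical Gevrey norm to the data norm. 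If one insists on the bare factor $\|\theta_0\|$ as written, the additional ingredient is the $L^2$ maximum–principle estimate $\|\theta(s)\|\le\|\theta_0\|$ for \eqref{QG}, which lets one interpolate the critical–level Gevrey norm between the $L^2$ norm (controlled by the data) and a higher Gevrey norm (controlled by $\|\theta(\cdot)\|_{\widetilde{E_T}}$).

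Since the substantive work is carried entirely by Theorem~\ref{thm:main} and by the elementary inequality underlying \eqref{gevreydecay}, there is no serious obstacle here. The one point genuinely demanding attention is that \eqref{gevreydecay} be invoked at the \emph{exact} base level $\hh^{2-\kappa}$, so that the power of $\rho$ comes out as $\rho^{n/\alpha}$ and not $\rho^{(n+2-\kappa)/\alpha}$; this is what forces the use of the critical–level Gevrey norm, and hence the $\widetilde{E_T}$ rather than the weighted $E_T$ formulation. Finally I would record the temporal decay rate implicit in the result: with $\rho=\lambda\alpha s^{\frac{\alpha}{\kappa}}$ one has $\rho^{-n/\alpha}\sim s^{-n/\kappa}$, which is the advertised sharp decay in time of the higher-order Sobolev norms of the solution.
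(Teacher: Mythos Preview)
Your proposal is correct and follows exactly the paper's own one-line argument, which simply says the corollary follows immediately from Theorem~\ref{thm:main} together with \eqref{gevreydecay}. Your discussion is in fact more careful than the paper's: you correctly flag that one needs the critical-level Gevrey bound (the $\widetilde{E_T}$ formulation from the remark) rather than the weighted $E_T$ norm, and you address the discrepancy between $\|\theta_0\|$ and $\|\theta_0\|_{\h^{2-\kappa}}$ that the paper leaves unexplained.
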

The proof of the corollary follows immediately from Theorem \ref{thm:main} and (\ref{gevreydecay}).
\begin{rem}  \label{rem:dongcomparison}
{\em In \cite{Dong}, it was proven that in case $\|\theta_0\|_{\hh^{2-\kappa}}$ is sufficiently small,
there exists constants $C_n$ such that the decay estimate 
\begin{gather*}
\|\Lambda^n \theta(s)\|_{\hh^{2-\kappa}} \le \frac{C_n}{s^{\frac{n}{\kappa}}}
\end{gather*}
holds. This is the same rate as in (\ref{solutiondecay}) except that the constants $C_n$ were not identified there,
which in our case follows as a consequence of Gevrey regularity. Moreover, the constants in (\ref{solutiondecay})
are ``near optimal" since the (optimal) rate for the linear case is same as in (\ref{solutiondecay}) with $\alpha =\kappa$
(see Remark \ref{rem:optimal}).

}
\end{rem}

\section{Notation and Preliminaries}

We will need the following notions and some standard results from harmonic analysis to proceed.
For more details, see for instance \cite{Chemin1}, \cite{danchinfourier}, \cite{bahourifourier}, or \cite{Runst}.
\subsection{Littlewood-Paley Decomposition}
Let $\phi, \psi\, \in {\cal D} (\R^d)$, with ranges contained in the interval $[0,1]$, 
and such that 
\begin{gather*}
\psi(\xi)=
\left\{
\begin{array}{l}
1, |\xi| \le \frac12,\\
0, |\xi \ge 1
\end{array}
\right.
\ \mbox{and}\ 
\phi(\xi)= \psi(\xi/2)-\psi(\xi).
\end{gather*}
Let  $\Delta_j$ and $\sj$ be the (homogeneous) dyadic Littlewood-Paley projections given by
\begin{gather*}
\cf (\dj f) = \phi(\cdot/2^j)\cf f\ \mbox{and}\  \cf(\sj f)=\psi(\cdot/2^{j-3})\cf f.
\end{gather*}
We denote the (open) ball $B(r)$ and the (open) annulus $\cA (r_1,r_2), 0<r_1<r_2$ by
\begin{gather*}
B(r)=\{\xi:|\xi| < r\}\ \mbox{and} \
\cA (r_1,r_2)=\{\xi:r_1< |\xi| < r_2\}.
\end{gather*}
For each $j \in \Z$, 
the Fourier spectrum of  $\dj f$ (respectively, $\sj f$) is ``localized" in  $\cA(2^{j-1},2^{j+1})$ 
(respectively,  $B(2^{j-3})$), i.e.,
\[
\cf (\dj f)=0\ \mbox{for}\ \xi \, \in \,\cA(2^{j-1},2^{j+1})^c\ \mbox{and}\ 
\cf (\sj f)(\xi)=0\  \mbox{for}\ \xi \, \in \,B(2^{j-3})^c.
\]
Moreover, 
\begin{gather*}
S_j = \sum_{k \le j-4}\Delta_k,
\end{gather*}
where the equality holds in the space of distributions ``modulo polynomials" \cite{bahourifourier, danchinfourier}.
The (homogeneous) paraproduct formula is given by
\begin{gather}  \label{paraprod}
fg = T_fg + T_gf +R(f,g),
\end{gather}  
where, denoting $\tdj = \sum_{k=j-3}^{j+3}\dk g$, $T_fg$ and $R(f,g)$ are given by
\begin{gather*}
T_fg = \sum_{j\in \Z} S_jf\Delta_jg,\ R(f,g)=\sum_{j,k:|j-k| \le 3} \Delta_jf \Delta_kg
=\sum_j \Delta_j f \tdj g, 
\end{gather*}
The following facts concerning the paraproduct decomposition will be used throughout:
\begin{gather} \label{spectralloc}
\Delta_i \Delta_k =0\ \mbox{if}\ |i-k| \ge 2,\ 
\Delta_i(S_kf\Delta_kg)=0\ \mbox{if}\ |i-k|\ge 3
\end{gather}
and
\begin{gather} \label{spectralloc2}
\Delta_i (\Delta_k f \tdk g) =0\ \mbox{if}\ i \ge k+6.
\end{gather}
The above two facts, namely (\ref{spectralloc}) and (\ref{spectralloc2}), follow readily from
the spectral localization of the operators $\dj$ and $\sj$.
\subsection{Bernstein and other related inequalities}
Let $f$ and $g$ be two Schwartz class functions with Fourier spectrum localized in the ball $B(r\mu)$
and annulus $\cA (r_1\mu,r_2\mu)$ respectively, with $\min\{r,r_1,r_2, \mu\} >0$. Then, for some constants $C,C_1,C_2$, depending
only on $r,r_1, r_2$, 
we have
\begin{gather} \label{bernstein1}
\|f\|_{\hh^{m}} \le C\mu^{m}\|f\| \ (m >0)\ \mbox{and}\ C_1\mu^{m'}\|g\|\le \|g\|_{\hh^{m'}} 
\leq C_2\mu^{m'}\|g\|,\ (m' \in \R).
\end{gather}
Moreover, 
\begin{gather}  \label{bernstein2}
\|S_jf\|_{\hh^m} \le \|f\|_{\hh^m}\ \mbox{and}\ 
C_1\|f\|\le \left(\sum_j \|\dj f\|^2\right)^{1/2} \le C_2\|f\|.
\end{gather}

As a consequence of the Young's convolution inequality and Parseval equality, we have
\begin{gather}  \label{youngs}
\|fg\|_{L^2} \le C\|\cf f\|_{L^p}\|\cf g\|_{L^q},\ \frac 32 = \frac 1p + \frac 1q, 1 \le p, q \le 2.
\end{gather}
We will also need  versions of the Bernsten's inequalities in the Fourier space that are easy to prove, 
namely, 
\begin{align}  
& C_1 2^{\alpha j}\|\cf \dj f\|_{L^p} \le \|\cf \dj \Lambda^\alpha f\|_{L^p}
\le C_2 2^{\alpha j}\|\cf \dj f\|_{L^p},\nn \\ 
& \|\cf \dj f\|_{L^p} \le C2^{2j(\frac 1p - \frac 1q )}\|\cf \dj f\|_{L^q},
 \  j \in \Z,\, 1 \le p \le q \le \infty,  \label{bernsteinfourier}
\end{align}
and where the constants $C, C_1, C_2$ are independent of $f$ and $j$
and the space dimension is two.

Recall that for $0\le \alpha \le 1$, the Gevrey norms that we will use were defined in Section \ref{sec:2} by
\begin{gather*} 
\|f\|_{G(s)}=\|\es f\| \mbox{and}\  
\|f\|_{G(s), \hh^m}= \|\es f\|_{\hh^m},\quad (\lambda >0\ \mbox{fixed}\,).
\end{gather*}
It is clear from the definition of the Gevrey norms that
\begin{gather*}
\|f\|_{G(s)} \ge \|f\|\ \mbox{and}\ \|f\|_{G(s), \hh^m}\ge \| f\|_{\hh^m}.
\end{gather*}
Moreover,
since $\es, \dj, \sj$ are all Fourier multipliers, they commute with each other, i.e.,
\begin{gather*}
\es \dj f = \dj \es  f\ \mbox{and} \ \es \sj f = \sj \es f,
\end{gather*}
 for $f$ in appropriate functional classes.
We will use these facts throughout without any further mention. 
The following inequalities will also be crucial. 

{\em 
Let $s >0$ and $\zeta_1, \zeta_2 \in \R$ satisfy
\begin{gather*}
\zeta_1+ \zeta_2 >0, \max\{\zeta_1,\zeta_2\} <1.
\end{gather*}
Then, for functions $f$ and $g$ belonging to $\hh^{\zeta_1}$ and $\hh^{\zeta_2}$ respectively, there exists a constant $C=C(\zeta_1,\zeta_2)$, which is independent of $s$, such that
\begin{align}
& \|f g\|_{\hh^{\zeta_1+\zeta_2-1}} \le C(\zeta_1,\zeta_2)\|f\|_{\hh^{\zeta_1}}\|g\|_{\hh^{\zeta_2}} \label{convineq1} \\
& \|f g\|_{G(s),_{\hh^{\zeta_1+\zeta_2-1}}} \le C(\zeta_1,\zeta_2)\|f\|_{G(s), \hh^{\zeta_1}}\|g\|_{G(s),\hh^{\zeta_2}}. \label{convineq2}
\end{align}
}
The first one 
is well known and is a consequence of a more general convolution inequality
of Kerman \cite{kerman} or can be found in \cite{Runst}.  
The second can easily be derived from the first as follows.  
Note first that for $\xi , \eta \in \R^2$ and $s \ge 0$, we have
 $|\xi| \le |\eta| + |\xi - \eta|$; consequently from the elementary inequality
\begin{gather}   \label{elemconvexity}
(x+y)^{\alpha} \le x^{\alpha}+y^{\alpha}, x,y \ge 0, 0 < \alpha \le 1, 
\end{gather}
we have
\begin{gather}  \label{convexity}
\esx \le \esttt \esxt.
\end{gather}
For notational simplicity, denote $\delta = \zeta_1+\zeta_2-1$, 
Using the Plancherel theorem and (\ref{convexity}),
\begin{align*}
& \|\es (fg)\|_{\hh^\delta}^2 = 
\int \left(\esx |\xi|^{\delta}\int f(\xi -\eta)g(\eta)\, d\eta\right)^2 d\xi \\
& \le \int \left(|\xi|^{\delta}\int \esxt |f(\xi -\eta)|\, \esttt|g(\eta)|\, d\eta\right)^2 d\xi \\
& \le \|\es f\|_{\hh^{\zeta_1}}^2\|\es g\|_{\hh^{\zeta_2}}^2.
\end{align*}
The last inequality follows by applying (\ref{convineq1}) to the ``auxilliary" functions
$\tilde{f}$ and $\tilde{g}$, where 
\begin{gather*}
\cf \tilde{f}(\xi) = \esx |f(\xi)|\ \mbox{and}\ \cf \tilde{g}(\xi) = \esx |g(\xi)|.
\end{gather*}
We have also used the elementary (yet, crucial) fact that 
\begin{gather*}
\|\es f\|_{\hh^{\zeta_1}}= \|\tilde{f}\|_{\hh^{\zeta_1}}\ \mbox{and}\ 
\|\es g\|_{\hh^{\zeta_2}}= \|\tilde{g}\|_{\hh^{\zeta_2}}.
\end{gather*}
This finishes the proof of \eqref{convineq1} and  \eqref{convineq2}.

Let $1\le p,q,r \le \infty$ with $1 + \frac 1r = \frac 1p + \frac 1q$.
Proceeding in an analogous manner and using  Young's convolution inequality,
 one readily obtains also the inequality
\begin{gather}  \label{gevreyconvolution}
\|\cf \left(\es (fg)\right)\|_{L^r}\le \|\cf (\es f)\|_{L^p}\|\cf (\es g)\|_{L^q}.
\end{gather}

\section{Proof of Theorem \ref{thm:commutest}}
In order to prove this result, we will need the following lemma, which may be regarded
as central to the proof of Theorem \ref{thm:commutest}.
\begin{lemma}  \label{lem:firstcommutest}
In the notation and setting of Theorem \ref{thm:commutest}, we have the estimate
\begin{align}
\|[T_f,\es\dj]g\| & \le Cc_j\left\{s^{\frac{(\alpha-\zeta)}{\kappa}} 2^{-(\delta_1+\delta_2+\zeta-\alpha)j} \|\sk \es f\|_{\hh^{1+\delta_1}}\| \es g\|_{\hh^{\delta_2}} \right. \nn \\
& \qquad \left.+  2^{-(\delta_1+\delta_2)j} \|\sk \es f\|_{\hh^{1+\delta_1}}\| \es g\|_{\hh^{\delta_2}}\right\}. \label{firstcommutest}
\end{align}
\end{lemma}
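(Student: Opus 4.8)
The plan is to estimate the paraproduct commutator $[T_f,\es\dj]g = T_f(\es\dj g) - \es\dj(T_f g)$ by passing to Fourier space and exploiting the spectral localization in (\ref{spectralloc}). Writing $T_f g = \sum_k S_k f\,\dk g$, the operator $\es\dj$ only interacts with terms having $|j-k|\le 2$ by the first relation in (\ref{spectralloc}), so the sum over $k$ collapses to a bounded number of frequency blocks near $j$. This reduces the problem to controlling, for each such $k$, the expression
\begin{gather*}
\dj\Bigl(\es(S_kf\,\dk g) - (S_k\es f)(\es \dk g)\Bigr),
\end{gather*}
where I have inserted $\es$ to rewrite the second term; here I first factor out $\es$ using the multiplier commutation noted in Section 3.

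\textbf{Extracting the cancellation through the exponential symbol.}
The heart of the matter is the symbol difference $\esx - \esxt\,e^{\lambda s^{\alpha/\kappa}|\eta|^\alpha}$ evaluated on the relevant frequencies, where $\xi$ is the total output frequency and $\eta$ is the (low) frequency carried by $S_k f$. Because $S_k f$ localizes $\eta$ in the ball $B(2^{k-3})$ while $\dk g$ forces $|\xi-\eta|\sim 2^k$, the ratio $|\eta|/|\xi-\eta|$ is small, and I would Taylor-expand the exponential in the difference to gain a factor measuring the size of $|\eta|^\alpha$ relative to the high frequency. The key elementary estimate I expect to invoke is a mean-value bound of the form
\begin{gather*}
\Bigl|\esx - \esxt\,e^{\lambda s^{\alpha/\kappa}|\eta|^\alpha}\Bigr|
\le C\,\lambda s^{\alpha/\kappa}\,|\eta|^{\alpha}\,\esxt\,e^{\lambda s^{\alpha/\kappa}|\eta|^\alpha},
\end{gather*}
valid thanks to the subadditivity (\ref{convexity}) together with $|\xi|\le|\eta|+|\xi-\eta|$; the produced factor $s^{\alpha/\kappa}|\eta|^\alpha$ is precisely what generates the $s^{(\alpha-\zeta)/\kappa}$ and $2^{-(\delta_1+\delta_2+\zeta-\alpha)j}$ gain in the first term of (\ref{firstcommutest}), after distributing $|\eta|^\alpha = |\eta|^\zeta\,|\eta|^{\alpha-\zeta}$ and absorbing $|\eta|^\zeta$ into the low-frequency factor. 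The restriction $\zeta<\alpha$ in (\ref{paramrestr}) is exactly what keeps this split meaningful.

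\textbf{Assembling the bilinear estimate.}
Once the pointwise symbol bound is in hand, I would estimate the $L^2$ norm of each dyadic block using the Gevrey convolution inequality (\ref{gevreyconvolution}) (or directly (\ref{youngs}) applied to the auxiliary functions $\tilde f,\tilde g$), converting the product structure into $\|S_k\es f\|_{\hh^{1+\delta_1}}\|\es g\|_{\hh^{\delta_2}}$ with the appropriate powers of $2^j$ supplied by the Bernstein inequalities (\ref{bernstein1}) and (\ref{bernsteinfourier}). The second term in (\ref{firstcommutest}), without the $s$-gain, arises from the ``remainder'' part of the Taylor expansion or equivalently from crudely bounding the difference by the sum of the two exponentials; it is the lower-order contribution that does not see the dissipation. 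Finally, summing the finitely many $k$ near $j$ in $\ell^2$ and using the square-summability in the second inequality of (\ref{bernstein2}) produces the coefficients $c_j$ with $\sum_j c_j^2\le 1$.

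\textbf{Main obstacle.}
The delicate step is the symbol difference estimate: I must extract a genuine gain of a full power $|\eta|^\alpha$ (not merely boundedness) while simultaneously retaining the exponential weights $\esxt$ and $e^{\lambda s^{\alpha/\kappa}|\eta|^\alpha}$ in a form that still factors into the two Gevrey norms. The danger is that the naive triangle-inequality bound $|\esx|\le \esxt\,e^{\lambda s^{\alpha/\kappa}|\eta|^\alpha}$ loses all cancellation and yields only the second (non-decaying) term; recovering the first term with its crucial $s^{(\alpha-\zeta)/\kappa}$ factor requires the mean-value/convexity argument to be carried out carefully and uniformly in $j$ and $s$, tracking that the constant $C$ depends only on $\lambda,\alpha,\kappa$ and the admissible exponents.
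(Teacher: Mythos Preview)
Your plan misidentifies the Fourier symbol of the commutator and, as a consequence, misses both pieces of the decomposition that actually drive the estimate. Writing $[T_f,\es\dj]g=\sum_{|k-j|\le2}[S_kf,\es\dj]\dk g$ is correct, but each summand has Fourier transform
\[
\int \cf(S_kf)(\eta)\,\cf(\dk g)(\xi-\eta)\Bigl[\esxt\,\phi\Bigl(\frac{\xi-\eta}{2^j}\Bigr)-\esx\,\phi\Bigl(\frac{\xi}{2^j}\Bigr)\Bigr]d\eta,
\]
\emph{not} one involving $\esx-\esxt\,\esttt$. The multiplier $\es$ does not factor through a product, so your expression $\dj\bigl(\es(S_kf\,\dk g)-(S_k\es f)(\es\dk g)\bigr)$ is not equal to $[S_kf,\es\dj]\dk g$; there is no extra weight $\esttt$ on the low frequency in the genuine commutator symbol. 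The paper splits the bracket above as $I+II$: in $I$ one freezes the exponential at $\xi$ and differences $\phi$, while in $II$ one freezes $\phi$ at $(\xi-\eta)/2^j$ and differences the exponential. The mean-value theorem applied to $\phi$ in $I$ produces the gain $2^{-j}|\eta|$, and \emph{that} is the origin of the second term $2^{-(\delta_1+\delta_2)j}$ in (\ref{firstcommutest}), not a crude bound on the exponential as you suggest.

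The more serious gap concerns the exponential difference. Your pointwise bound yields only a factor $s^{\alpha/\kappa}|\eta|^{\alpha}$ together with the \emph{full} weight $\esttt$ on the low frequency; this leaves no mechanism for converting $s^{\alpha/\kappa}$ into $s^{(\alpha-\zeta)/\kappa}$, and the resulting estimate is weaker than the target by a factor $(s^{1/\kappa}2^{j})^{\zeta}$, which is unbounded and in fact would make the time integral in (\ref{penultimate}) diverge. The paper instead uses the integral representation
\[
\esx-\esxt=\int_0^1\frac{d}{d\tau}\,e^{\lambda s^{\alpha/\kappa}|\xi-(1-\tau)\eta|^{\alpha}}\,d\tau,
\]
which (via the geometry $|\xi-(1-\tau)\eta|\sim|\xi|$) gains a \emph{full} power $|\eta|$---needed to reach $\hh^{1+\delta_1}$ through (\ref{convineq1})---while leaving only $\estt$ with $\tau<1$ on the low frequency. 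That slack $(1-\tau^{\alpha})$ is then spent via the smoothing bound
\[
\|\Lambda^{\zeta}e^{-\lambda s^{\alpha/\kappa}(1-\tau^{\alpha})\Lambda^{\alpha}}h\|\le C\bigl(s^{\alpha/\kappa}(1-\tau^{\alpha})\bigr)^{-\zeta/\alpha}\|h\|,
\]
and the hypothesis $\zeta<\alpha$ enters precisely to make $\int_0^1(1-\tau^{\alpha})^{-\zeta/\alpha}\,d\tau$ finite. Without this integral representation and the accompanying semigroup step you cannot reach the stated bound.
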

\begin{proof}
Due to (\ref{spectralloc}), we have 
\begin{gather*}
[T_f,\es\dj]g = \sum_{k:|k-j| \le 2}[\sk f,\es \dj]\dk g.
\end{gather*}
Note that 
\begin{align*}
\lefteqn{-\cf [\sk f,\es\dj]\dk g (\xi) =}\\
& \ \int \cf \sk f(\eta)\cf \dk g(\xi-\eta)\esx\left[\phi(\frac{\xi}{2^j})-\phi(\frac{\xi-\eta}{2^j})\right]d\eta \\
& \qquad \qquad 
+ \int \cf \sk f(\eta)\cf \dk g(\xi-\eta)\phi(\frac{\xi-\eta}{2^j})\left[\esx - \esxt \right]d\eta \\
& = I + II.
\end{align*}
Recall that from (\ref{spectralloc}), we have
\begin{gather*}  
I=II=0\ \mbox{if}\ \xi \in [\cA (2^{k-2},2^{k+2})]^c
\ \mbox{and}\ [\sk f,\es\dj]\dk g=0\ \mbox{if}\ |j-k| \ge 3.
\end{gather*}
Due to this, for any $\delta \in \R$ and $k \in [j-2,j+2] \cap \Z$, we have
\begin{gather}  \label{spectralloc1}
\|I\|\le C2^{-\delta k}\|\Lambda^\delta I\|\le C2^{-\delta j}\|\Lambda^\delta I\|
\ \mbox{and}\ \|II\|\le C2^{-\delta k}\|\Lambda^\delta II\|\le C2^{-\delta j}\|\Lambda^\delta II\|,
\end{gather}
where the constant $C$ is independent of $j, f$ and $g$. Now note that
since $\phi $ and all its derivatives are uniformly bounded, applying the mean value theorem (to $\phi$),
we obtain
\begin{gather*}   
|\phi(\frac{\xi}{2^j})-\phi(\frac{\xi-\eta}{2^j})| \le C_\phi 2^{-j}|\eta|, C_\phi=\|\phi'\|_{L^\infty}.
\end{gather*}
Inserting the above  estimate and (\ref{convexity}) in $I$ and using  
(\ref{spectralloc1}), we obtain
\begin{align*}
& \left|\int \cf \sk f(\eta)\cf \dk g(\xi-\eta)\esx\left[\phi(\frac{\xi}{2^j})-\phi(\frac{\xi-\eta}{2^j})\right]d\eta\right|
\\
& \le 
C2^{-(\delta_1+\delta_2)j}\left\|\Lambda^{(\delta_1+\delta_2-1)}
\int|\eta| |\esttt \cf \sk f(\eta)||\esxt\cf \dk g(\xi-\eta)|d\eta\right\|.
\end{align*}
Now using (\ref{convineq1})  with 
$\zeta_1=\delta_1, \zeta_2=\delta_2$, 
we finally obtain the estimate
\begin{align} 
 \|I\| & \le C2^{-(\delta_1+\delta_2)j} \|\sk  \es f\|_{\hh^{1+\delta_1}}\|\dk \es g\|_{\hh^{\delta_2}} \nn \\
& \le  C2^{-(\delta_1+\delta_2)j}c_j\|\es f\|_{\hh^{1+\delta_1}}\| \es g\|_{\hh^{\delta_2}}, \nn \\
& \mbox{where}\ 0<\delta_1, \delta_2<1\ \mbox{and}\ c_j = 
\frac{\left(\sum_{k=j-3}^{j+3}\|\dk \es g\|^2_{\hh^{\delta_2}}\right)^{1/2}}{C'\|\es g\|_{\hh^{\delta_2}}} ,  \label{Iest}
\end{align}
where we have also used the first inequality in (\ref{bernstein2}) above. Furthermore, the constant
 $C'$ in (\ref{Iest})  may depend only on $\delta_1,\delta_2$ and $\sum c_j^2 \le 1$. 
These facts  follow from the second inequality in (\ref{bernstein2}).

We will now estimate $II$.
Note that
\begin{align}
& \left|\esx - \esxt\right| =
\left| \int_0^1 \frac{d}{d\tau} ( e^{\lambda {\D s^{\frac{\alpha}{\kappa}} |\xi-(1-\tau)\eta|^\alpha}})d\tau\right| \nn \\
& \le C\lambda s^{\frac{\alpha}{\kappa}} \int_0^1 \frac{|\eta|}{|\xi-(1-\tau)\eta|^{1-\alpha}}
e^{\lambda {\D s^{\frac{\alpha}{\kappa}} |\xi-(1-\tau)\eta|^\alpha}}d\tau.
\end{align}
Since $0<\alpha \le 1$, from (\ref{elemconvexity}) it follows that
\begin{gather*}
|\xi - (1-\tau)\eta|^\alpha =
|(\xi-\eta) + \tau \eta|^\alpha \le |\xi-\eta|^\alpha + \tau^\alpha|\eta|^\alpha.
\end{gather*}
Consequently,
\begin{gather}  \label{expterm}
e^{\lambda {\D s^{\frac{\alpha}{\kappa}} |\xi-(1-\tau)\eta|^\alpha}} \le \esxt \estt.
\end{gather}
Recall that the support of $\cf \sk f$ is in $B(2^{k-3})$ and the support of 
$\cf \dk g$ is in $\cA (2^{k-1},2^{k+1})$. Thus, for the integrand in $II$ to be nonzero, 
we must have
\begin{gather}  \label{crucialtriangle}
|\xi| \ge |\xi-\eta| - |\eta| \ge 2^{k-1}-2^{k-3}=3(2^{k-3})\ge 3|\eta|.
\end{gather}
Since $0\le(1-\tau) \le 1$, this immediately implies that
\begin{gather}  \label{geometry}
|\xi -(1-\tau)\eta| \ge |\xi| - (1-\tau)|\eta| \ge |\xi|-|\eta| \ge \frac{2}{3}|\xi|.
\end{gather}
\comments{
For notational simplicity, temporarily denote $\tau_1 = 1-\tau, \eta_1 =-\eta$. Then $\xi-(1-\tau)\eta =\xi+\tau_1\eta_1$ and let $\theta \in [0,\pi]$ be the angle between the vectors $\xi$ and $\eta_1$. 
First assume that $0 \le \theta \le 2\pi/3$. Then, using the fact that 
$\cos(\theta) \ge - \frac{1}{2},\ \theta \in [0,2\pi/3]$ and the inequality 
 $|\eta_1|=|\eta| \le |\xi|/3$ (see \ref{crucialtriangle}), we have
\begin{align}  
& |\xi + \tau_1\eta_1| ^2= |\xi|^2 +\tau_1^2 |\eta_1|^2 +2\tau_1|\xi||\eta_1|\cos(\theta) \nn \\
& \ge |\xi|^2 -|\xi|^2/3=\frac{2}{3}|\xi|^2,\ \theta \in [0,2\pi/3]. \label{geometry1}
\end{align}
For $\theta \in [2\pi/3 , \pi]$, we have $\frac12 \le -\cos(\theta) \le 1$. Combined with the fact
$\tau_1 \le 1$ and $|\eta_1| \le |\xi|/3$, we readily get
\begin{gather}  \label{geometry2}
 |\xi + \tau_1\eta_1| ^2 \ge |\xi+\eta_1|^2 = |\xi-\eta|^2,\ \theta \in [2\pi/3,\pi].
\end{gather}
Combining (\ref{extt}), (\ref{geometry1}), (\ref{geometry2}), and recalling that $\phi$ is bounded
and $\alpha \le 1$,
we obtain
\begin{align*}
& |II| \le Cs^\alpha\left\{\int_0^1 \int |\eta| |\estt \cf \sk f(\eta)|\frac{|\esxt \cf \dk g(\xi-\eta)|}
{|\xi-\eta|^{1-\alpha}}d\eta\, d\tau \right. \\
& \qquad \qquad \left. \frac{1}{|\xi|^{1-\alpha}}
\int_0^1 \int |\eta| |\estt \cf \sk f(\eta)||\esxt \cf \dk g(\xi-\eta)|d\eta\, d\tau \right\}\\
& III + IV.
\end{align*}
}
From (\ref{geometry}), we obtain
\begin{gather}   \label{commutest}
 |II| \le C\frac{s^{\frac{\alpha}{\kappa}}}{|\xi|^{1-\alpha}} 
\int_0^1 \int |\eta| |\estt \cf \sk f(\eta)||\esxt \cf \dk g(\xi-\eta)|d\eta\, d\tau .
\end{gather}
\comments{
To estimate $\|III\|$, 
we apply (\ref{covineq1}) with $\zeta_1=\delta_1+\eta, \zeta_2=\delta_2+(1-\alpha)$, (\ref{spectralloc})
and Minkowski's inequality and get
\begin{align}
\|III\| & \le Cs^{\frac{\alpha}{\kappa}}2^{-(\delta_1+\delta_2+\eta-\alpha)j}\int_0^1
 \|\sk e^{s^\alpha \tau^\alpha \Lambda^\alpha} f\|_{\hh^{1+\delta_1+\eta}}\|\dk \es g\|_{\hh^{\delta_2}}
\, d\tau \nn \\
& \le Cs^\alpha 2^{-(\delta_1+\delta_2+\eta-\alpha)j}  \|\sk \es f\|_{\hh^{1+\delta_1}}\|\dk \es g\|_{\hh^{\delta_2}}
\int_0^1 \frac{1}{s^\eta(1-\tau^\alpha)^{\eta/\alpha}}\, d\tau \nn \\
& \le Cs^{\alpha -\eta}2^{-(\delta_1+\delta_2+\eta-\alpha)j} \|\sk \es f\|_{\hh^{1+\delta_1}}\|\dk \es g\|_{\hh^{\delta_2}}.  \label{IIIest}
\end{align}
In the above inequalities, we have used the fact that $\int_0^1 \frac{1}{(1-\tau^\alpha)^{\eta/\alpha}}\, d\tau <\infty$, since $\alpha \le 1$ and $\eta < \alpha$.
}
Since $II$ is non-zero only for $\xi \in \cA (2^{k-2},2^{k+2})$ and $|j - k|\le 2$ (see (\ref{spectralloc})), 
we have
\begin{gather*}
\frac{1}{|\xi|^{1-\alpha}} \le C2^{-(1-\alpha)j}. 
\end{gather*}
Thus, from (\ref{commutest}), we obtain
\begin{gather}   \label{commutest1}
 |II| \le C\frac{s^{\frac{\alpha}{\kappa}}}{2^{(1-\alpha)j}} 
\int_0^1 \int |\eta| |\estt \cf \sk f(\eta)||\esxt \cf \dk g(\xi-\eta)|d\eta\, d\tau .
\end{gather}

To the inequality in (\ref{commutest1}), we apply
 (\ref{convineq1}) with $\zeta_1=\delta_1+\zeta, \zeta_2=\delta_2$, followed by
 Minkowski inequality (in order to switch $d\tau $ and $d\xi $ integrals while computing relevant norms). 
Consequently, consulting also the second inequality in (\ref{spectralloc1}), we obtain 
\begin{gather}  \label{semigpapply}
\|II\|  \le Cs^{\frac{\alpha}{\kappa}}2^{-(\delta_1+\delta_2+\zeta-\alpha)j}\int_0^1
 \|\sk e^{\lambda {\D s^\frac{\alpha}{\kappa} \tau^\alpha \Lambda^\alpha} }f\|_{\hh^{1+\delta_1+\zeta}}\|\dk \es g\|_{\hh^{\delta_2}}
\, d\tau. 
\end{gather}
Now note that 
\begin{align*}
& \|\sk e^{\lambda {\D s^{\frac{\alpha}{\kappa}}\tau^{\alpha} \Lambda^{\alpha}}} f\|_{\hh^{1+\delta_1+\zeta}}
=\|\Lambda^{\zeta} e^{-\lambda {\D s^{\frac{\alpha}{\kappa}}(1 -\tau^{\alpha}) \Lambda^{\alpha }}}\sk\es f\|_{\hh^{1+\delta_1}}
 \\
& \le \frac{C}{(s^{\frac{\alpha}{\kappa}} (1 -\tau^{\alpha}) )^{\zeta/\alpha}} \|\sk \es f\|_{\hh^{1+\delta_1}}, 
\end{align*}
where the last inequality follows from Plancherel equality and the elementary estimate
\begin{gather}  \label{elemsemigp}
\sup_{x>0}x^me^{-tx^{\kappa}} \le \frac{C(m, \kappa)}{t^{m/\kappa}}, \qquad (m\ge 0).
\end{gather}

Therefore, from (\ref{semigpapply}), we obtain
\begin{align}
&  \|II\|  \le Cs^{\frac{\alpha}{\kappa}} 2^{-(\delta_1+\delta_2+\zeta-\alpha)j}  \|\sk \es f\|_{\hh^{1+\delta_1}}\|\dk \es g\|_{\hh^{\delta_2}}
\int_0^1 \frac{1}{s^{\frac{\zeta}{\kappa}}(1-\tau^\alpha)^{\zeta/\alpha}}\, d\tau \nn \\
& \le Cs^{\frac{(\alpha -\zeta)}{\kappa}}2^{-(\delta_1+\delta_2+\zeta-\alpha)j} \|\sk \es f\|_{\hh^{1+\delta_1}}\|\dk \es g\|_{\hh^{\delta_2}} \nn \\
& \le  Cs^{\frac{(\alpha -\zeta)}{\kappa}}2^{-(\delta_1+\delta_2+\eta-\alpha)j} c_j\| \es f\|_{\hh^{1+\delta_1}}\|\es g\|_{\hh^{\delta_2}}, \label{IIest}
\end{align}
where $c_j$ is as defined in (\ref{Iest}).
To obtain (\ref{IIest}), we also used the fact that
\[
\int_0^1 \frac{1}{(1-\tau^\alpha)^{\zeta/\alpha}}\, d\tau <\infty,\ (\mbox{since}\ \alpha \le 1\ 
\mbox{and}\  \zeta < \alpha).
\]
\comments{
\begin{gather}  \label{IVest}
\|IV\| \le  Cs^{\alpha-\eta} 2^{-(\delta_1+\delta_2+\eta-\alpha)j} \|\sk \es f\|_{\hh^{1+\delta_1}}\|\dk \es g\|_{\hh^{\delta_2}}.
\end{gather}
}
Putting together (\ref{Iest}) and (\ref{IIest}), we obtain (\ref{firstcommutest}).
\end{proof}

\noindent
{\em Proof of Theorem \ref{thm:commutest}:}
 Note that by (\ref{paraprod}),
\begin{align}
\lefteqn{[f,\es \dj]g } \nn \\
&\quad = [T_f,\es\dj]g + T_{\es\dj g}f - \es \dj (T_gf) \nn \\
& \quad +R(f,\es\dj g) - \es\dj R(f,g).  \label{gevreyparaprod}
\end{align} 
Concerning the first term $ [T_f,\es\dj]g $,
the inequality stated in (\ref{maincommutest}) follows immediately from 
 (\ref{firstcommutest}).
We will now estimate the remaining terms on the right hand side of (\ref{gevreyparaprod}).
Observing that $\es \dj=\dj \es$ and using (\ref{spectralloc}),
\comments{ replacing $g$ by $\es g$
 and proceeding as in Miura (page 147, top part),} we obtain 
\begin{align} 
& \|T_{\es \dj g}f\| =\|\sum_{k:k\ge j+2}(\sk \dj \es g)(\dk f)\| \nn \\
& \le \|\dj \es g\|\sum_{k:k\ge j+2} \|\cf \dk f\|_{L^1} \le 
C \|\dj \es g\|\sum_{k:k\ge j+2} \|\dk f\|_{\hh^1}
\label{step1} \\
& \le C2^{-j\delta_2}\|\dj \es g\|_{\hh^{\delta_2}}\sum_{k:k\ge j+2} 2^{-k\delta_1}\|\dk f\|_{\hh^{1+\delta_1}} \label{step2} \\
& \le C2^{-j\delta_2}\|\dj \es g\|_{\hh^{\delta_2}}\left(\sum_{k:k \ge j+2} 2^{-2k\delta_1}\right)^{1/2}
\left(\sum_{k:k\ge j+2} \|\dk f\|_{\hh^{1+\delta_1}}^2\right)^{1/2} \nn \\
& \le Cc_j2^{-(\delta_1+\delta_2)j}\|\es  g\|_{\hh^{\delta_2}}\|f\|_{\hh^{1+\delta_1}},\label{2ndterm}
\end{align}
where $c_j$ is as in (\ref{Iest}) and in order to obtain (\ref{step1}) and (\ref{step2}), 
we successively used Young's convolution inequality, (\ref{bernsteinfourier})  and (\ref{bernstein1}).
Proceeding in a similar manner, with $c_j$ as in (\ref{Iest}), we obtain
\begin{gather} \label{4thterm}
\|R(f,\es \dj gf\| \le C2^{-(\delta_1+\delta_2)j}c_j\|\es  g\|_{\hh^{\delta_2}}\|f\|_{\hh^{1+\delta_1}}.
\end{gather}

We will now estimate $\|\es \dj ( T_gf)\|$. Due to (\ref{spectralloc}), we have
\begin{gather*}
\es \dj (T_gf) = \sum_{k:|k-j| \le 2} \es \dj \sk g \dk f=\sum_{k:|k-j| \le 2} \dj \es \sk g \dk f.
\end{gather*}
We have
\begin{align*}
& \| \dj \es (\sk g \dk f)\| \le C2^{-(\delta_1+\delta_2-1)j}\|\es (\sk g \dk f)\|_{\hh^{\delta_1+\delta_2-1}}\\
& \le  C2^{-(\delta_1+\delta_2-1)j}\|\es \dk f\|_{\hh^{\delta_1}}\|\es \sk g\|_{\hh^{\delta_2}}\\
&\le C2^{-(\delta_1+\delta_2)j}\|\es \dk f\|_{\hh^{1+\delta_1}}\|\es \sk g\|_{\hh^{\delta_2}} \\
& \le Cc_j2^{-(\delta_1+\delta_2)j}\|\es  f\|_{\hh^{1+\delta_1}}\|\es  g\|_{\hh^{\delta_2}},\ 
c_j = \frac{\left(\sum_{k=j-2}^{j+2} \|\es \dk f\|^2_{\hh^{1+\delta_1}}\right)^{1/2}}{\|\es  f\|_{\hh^{1+\delta_1}}},
\end{align*}
where the first inequality in the above line is obtained using (\ref{bernstein1}), the second using 
(\ref{convineq2}) and the third again by (\ref{bernstein1}). Additionally, we have also used the fact
that $k \in [j-2,j+2] \cap {\mathbb N}$.

Finally, we will prove that there exists $\{c_j\}_{-\infty}^{\infty}, c_j \ge 0$ with $\sum c_j^2 \le 1$ such that
\begin{gather}  \label{remainder}
\|\es \dj R(f,g)\| \le C2^{-(\delta_1+\delta_2)j}c_j \|\es f\|_{\hh^{1+\delta_1}}
\|\|\es g\|_{\hh^{\delta_2}}, \delta_1+\delta_2 >0.
\end{gather}
\comments{Denote $\tdk = \sum\limits_{i:|i-k| \le 3 }\Delta_i$. With this notation, $R(f,g) = \sum_k \tdk f \dk g$.
Note also that by spectral localization,
$\dj(\tdk f \dk g)= 0, k < j-6$. Thus,}
From (\ref{spectralloc2}) we have
\begin{align}
& \|\es \dj R(f,g)\| \le 2^{-(\delta_1+\delta_2)j}\sum_{k \ge j-6}2^{(\delta_1+\delta_2)j}
\|\es (\tdk f \dk g)\| \nn \\
& \le 2^{-(\delta_1+\delta_2)j}\sum_{k \ge j-6}2^{(\delta_1+\delta_2)j} \|\cf \es \tdk f\|_{L^1}
\|\es \dk g\| \label{applygevreyconvolution}  \\
& \le 2^{-(\delta_1+\delta_2)j}\sum_{k \ge j-6}2^{(\delta_1+\delta_2)j} 2^k\|\es \tdk f\|
\|\es \dk g\| \label{applybernsteinfourier} \\
& \le 2^{-(\delta_1+\delta_2)j}\sum_{k \ge j-6}2^{(\delta_1+\delta_2)(j-k)}2^{(\delta_1+1)k}\|\es \tdk f\|
\, 2^{\delta_2k}\|\es \dk g\|, \label{convterm}
\end{align}
where to obtain (\ref{applygevreyconvolution} ), we used (\ref{gevreyconvolution}), while to obtain
(\ref{applybernsteinfourier}), we used (\ref{bernsteinfourier}).
Let $(a_k)_{k\in\Z}$ and $(b_k)_{k\in\Z}$ be sequences defined by
\begin{gather*}
a_k = 2^{(\delta_1+1)k}\|\es \tdk f\|
2^{\delta_2k}\|\es \dk g\|, b_k = {\chi}_{[-6,\infty)}(k)2^{-(\delta_1+\delta_2)k}.
\end{gather*}
Applying Cauchy-Schwartz and the second inequality in (\ref{bernstein2}),  we have
\begin{gather}   \label{aknorm}
\|(a_k)_{k\in \Z}\|_{\ell_1} \le C\|\es f\|_{\hh^{1+\delta_1}}\|\es g\|_{\hh^{\delta_2}}.
\end{gather}
 Define 
\begin{align*}
c_j& = \frac{1}{C\|\es f\|_{\hh^{1+\delta_1}}\|\es g\|_{\hh^{\delta_2}}}\sum_{k\in \Z}b_{j-k}a_k\\
& =\frac{1}{C\|\es f\|_{\hh^{1+\delta_1}}\|\es g\|_{\hh^{\delta_2}}}\sum_{k \ge j-6}2^{(\delta_1+\delta_2)(j-k)}2^{(\delta_1+1)k}\|\es \tdk f\|
2^{\delta_2k}\|\es \dk g\|,
\end{align*}
where $C$ is as in (\ref{aknorm}). Now using (\ref{aknorm}), the fact that $\|(b_k)\|_{\ell_2} < \infty $
 ( since $\delta_1+\delta_2 >0$) and Young's convolution inequality for sequences, 
we get that $\sum c_j^2 \le 1$. Using this fact,  we immediately obtain (\ref{remainder}) from (\ref{convterm}). 

\section{Proof of Main Result}
We will need the following lemma, the proof of which follows that of Lemma 8 in \cite{oliver2000remark}.
\begin{lemma}
Let $\alpha < \kappa $ and
$f$ is such that $f \in \hh^{\alpha/2}$ and $\es f \in \hh^{\kappa/2}$. Then $\es f \in \hh^{\alpha/2}$
and we have the  estimate 
\begin{gather}  \label{titiest}
\|\es f\|_{\hh^{\alpha/2}}^2 \le e\|f\|_{\hh^{\alpha/2}}^2 + 
(2\lambda)^{\frac{\kappa}{\alpha}-1}s^{1-\frac{\alpha}{\kappa}}\|\es f\|_{\hh^{\kappa/2}}^2.
\end{gather}
\end{lemma}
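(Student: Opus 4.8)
The plan is to pass to the Fourier side and reduce the claimed estimate to an elementary pointwise inequality for the Fourier multiplier. Since $\es$ is the positive Fourier multiplier $e^{\lambda s^{\frac{\alpha}{\kappa}}|\xi|^\alpha}$, Plancherel's theorem gives
\[
\|\es f\|_{\hh^{\alpha/2}}^2 = \int_{\R^2} |\xi|^{\alpha}\, e^{2\lambda s^{\frac{\alpha}{\kappa}}|\xi|^\alpha}\,|\cf f(\xi)|^2\, d\xi,
\]
and likewise $\|f\|_{\hh^{\alpha/2}}^2$ and $\|\es f\|_{\hh^{\kappa/2}}^2$ are the integrals of $|\xi|^\alpha|\cf f|^2$ and $|\xi|^\kappa e^{2\lambda s^{\frac{\alpha}{\kappa}}|\xi|^\alpha}|\cf f|^2$ respectively. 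Thus it suffices to prove, for every $\xi$, the pointwise bound
\[
|\xi|^{\alpha}\, e^{2\lambda s^{\frac{\alpha}{\kappa}}|\xi|^\alpha} \le e\,|\xi|^\alpha + (2\lambda)^{\frac{\kappa}{\alpha}-1}s^{1-\frac{\alpha}{\kappa}}\,|\xi|^\kappa e^{2\lambda s^{\frac{\alpha}{\kappa}}|\xi|^\alpha},
\]
after which one integrates against $|\cf f|^2\,d\xi$; the finiteness of the right-hand side under the stated hypotheses also yields $\es f \in \hh^{\alpha/2}$.

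To establish this pointwise inequality I would split frequency space according to the size of the exponent $\tau := 2\lambda s^{\frac{\alpha}{\kappa}}|\xi|^\alpha$. On the region where $\tau \le 1$, the exponential satisfies $e^{\tau} \le e$, so the left-hand side is at most $e|\xi|^\alpha$, which is absorbed by the first term on the right. On the complementary region where $\tau > 1$, I write $|\xi|^\alpha = |\xi|^\kappa\,|\xi|^{-(\kappa-\alpha)}$ and use $\tau > 1$, i.e.\ $|\xi|^{-(\kappa-\alpha)} < \bigl(2\lambda s^{\frac{\alpha}{\kappa}}\bigr)^{\frac{\kappa-\alpha}{\alpha}}$; here the hypothesis $\alpha < \kappa$ is essential, since it makes $\kappa-\alpha$ positive and the inequality point in the favorable direction. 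Because both terms on the right are nonnegative, adding the two regional estimates gives the pointwise bound everywhere.

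The only real bookkeeping is checking that the prefactor produced on the high-frequency region is exactly the claimed one. Separating the powers of $2\lambda$ and of $s$ in $\bigl(2\lambda s^{\frac{\alpha}{\kappa}}\bigr)^{\frac{\kappa-\alpha}{\alpha}}$, one finds $(2\lambda)^{\frac{\kappa-\alpha}{\alpha}} = (2\lambda)^{\frac{\kappa}{\alpha}-1}$ and $\bigl(s^{\frac{\alpha}{\kappa}}\bigr)^{\frac{\kappa-\alpha}{\alpha}} = s^{1-\frac{\alpha}{\kappa}}$, matching the constant in \eqref{titiest}. I expect no genuine obstacle here: the argument is a frequency-localization device analogous to Lemma 8 of \cite{oliver2000remark}, and the constraint $\alpha < \kappa$ enters precisely to make the trade of $|\xi|^\alpha$ for $|\xi|^\kappa$ on high frequencies quantitatively valid.
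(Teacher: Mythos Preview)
Your proof is correct and is essentially the same as the paper's. The paper packages your case split as the single elementary inequality $e^x \le e + x^m e^x$ for $x\ge 0$, $m>0$ (proved exactly by noting $e^x\le e$ for $x\le 1$ and $1\le x^m$ for $x\ge 1$), applied with $x=\tau=2\lambda s^{\alpha/\kappa}|\xi|^\alpha$ and $m=\frac{\kappa}{\alpha}-1$; your argument simply unwinds that inequality at the level of the frequency variable.
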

\begin{proof}
From the Plancherel equality, we have
\begin{gather}  \label{lemplancherel}
\|\es f\|_{\hh^{\alpha/2}}^2 = \int |\xi|^\alpha 
e^{2\lambda s^{\frac{\alpha}{\kappa}}|\xi|^\alpha} |(\cf f)(\xi)|^2\, d\xi.
\end{gather}
Moreover, for all $x \ge 0$ and $m>0$, we have the inequality $e^x \le e+x^me^x$. This is due to the fact that $e^x \le e$ for $x \in [0,1]$ and $e^x \le x^me^x$ for $x \ge 1$. Applying this to (\ref{lemplancherel})
with $x = 2\lambda s^{\frac{\alpha}{\kappa}}|\xi|^\alpha$ and $m= \frac{\kappa}{\alpha}-1$, 
we obtain the desired conclusion.
\end{proof}
We will also need an estimate for the linear term given in the lemma below.
\begin{lemma}  \label{lem:linest}
Let $\theta_0 \in \hh^{2-\kappa}$ and $\beta>0$. Denote
\begin{gather}   \label{caloric}
 \|\theta_0\|_{E_T} = \sup_{0<s\le T} s^{\frac{\beta}{\kappa}}\|\es e^{-s\Lambda^\kappa}\theta_0\|_{\hh^{2-\kappa+\beta}}.
\end{gather}
In this setting, with a constant $C$ independent of $T$ and $\theta_0$, we have
\begin{gather}  \label{linest}
\|\theta_0\|_{E_T} \le C\|\theta_0\|_{\hh^{2-\kappa}}\ \mbox{and}\
\lim_{T \ra 0} \|\theta_0\|_{E_T} =0.
\end{gather}
\end{lemma}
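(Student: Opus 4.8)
The plan is to reduce the whole statement, via Plancherel, to a pointwise bound on a single scalar Fourier multiplier. Writing $\cf(\es e^{-s\Lambda^\kappa}\theta_0)(\xi) = e^{\lambda s^{\frac{\alpha}{\kappa}}|\xi|^\alpha}e^{-s|\xi|^\kappa}\cf\theta_0(\xi)$ and splitting $|\xi|^{2(2-\kappa+\beta)} = |\xi|^{2(2-\kappa)}|\xi|^{2\beta}$, one obtains
\[
s^{\frac{2\beta}{\kappa}}\|\es e^{-s\Lambda^\kappa}\theta_0\|_{\hh^{2-\kappa+\beta}}^2 = \int M(s,\xi)\,|\xi|^{2(2-\kappa)}|\cf\theta_0(\xi)|^2\, d\xi, \qquad M(s,\xi) = s^{\frac{2\beta}{\kappa}}|\xi|^{2\beta}e^{2\lambda s^{\frac{\alpha}{\kappa}}|\xi|^\alpha}e^{-2s|\xi|^\kappa}.
\]
The decisive observation is that $M$ is \emph{scale invariant}: the substitution $y = s^{1/\kappa}|\xi|$ turns it into $M(s,\xi) = \Phi(y)$ with $\Phi(y) = y^{2\beta}e^{2\lambda y^\alpha - 2y^\kappa}$, a function of $y$ alone.

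First I would establish the uniform bound. Since $\beta>0$ we have $\Phi(y)\ra 0$ as $y\ra 0^+$, and since $\alpha<\kappa$ the term $-2y^\kappa$ dominates both $2\lambda y^\alpha$ and the polynomial factor, so $\Phi(y)\ra 0$ as $y\ra\infty$ as well. Being continuous and vanishing at both ends, $\Phi$ is bounded on $[0,\infty)$; put $C_0 = \sup_{y\ge 0}\Phi(y) < \infty$. Then $M(s,\xi)\le C_0$ for every $s>0$ and every $\xi$, and inserting this into the identity above and taking the supremum over $s\in(0,T]$ gives $\|\theta_0\|_{E_T}\le \sqrt{C_0}\,\|\theta_0\|_{\hh^{2-\kappa}}$, so that $C=\sqrt{C_0}$ is manifestly independent of $T$ and $\theta_0$.

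For the limit I would note that $T\mapsto \sup_{0<s\le T}(\cdots)$ is monotone, so it suffices to show $F(s) := \int M(s,\xi)|\xi|^{2(2-\kappa)}|\cf\theta_0|^2\,d\xi \ra 0$ as $s\ra 0^+$. Given $\eps>0$, I split the integral at $|\xi|=R$. On $\{|\xi|>R\}$ I use $M\le C_0$ to bound the contribution by $C_0\int_{|\xi|>R}|\xi|^{2(2-\kappa)}|\cf\theta_0|^2\,d\xi$, which is the tail of the convergent integral $\|\theta_0\|_{\hh^{2-\kappa}}^2$ and hence $<\eps$ once $R$ is large. On $\{|\xi|\le R\}$ I use $M(s,\xi)=\Phi(s^{1/\kappa}|\xi|)$ with $s^{1/\kappa}|\xi|\le s^{1/\kappa}R$, so that $\sup_{|\xi|\le R}M(s,\xi)\le \sup_{0\le y\le s^{1/\kappa}R}\Phi(y)$; since $\Phi$ is continuous with $\Phi(0)=0$, this supremum tends to $0$ as $s\ra 0$ with $R$ fixed, making the low-frequency piece $<\eps$ for $s$ small. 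Combining gives $F(s)\ra 0$, whence $\lim_{T\ra 0}\|\theta_0\|_{E_T}=0$.

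The main obstacle is precisely the scale invariance of $M$: the uniform bound alone carries no smallness as $s\ra 0$, so the decay cannot come from sharpening the pointwise estimate. Instead it must be extracted by the low/high-frequency splitting, combining the vanishing $\Phi(0)=0$ (which uses $\beta>0$) on low frequencies with the convergent-tail estimate (which uses only $M\le C_0$ and $\theta_0\in\hh^{2-\kappa}$) on high frequencies; the hypothesis $\alpha<\kappa$ is what makes $\Phi$ bounded in the first place.
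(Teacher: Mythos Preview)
Your proof is correct and follows essentially the same route as the paper. Both use Plancherel and the scale-invariance $s^{1/\kappa}|\xi|\mapsto y$ to reduce the uniform bound to the boundedness of $\Phi(y)=y^{2\beta}e^{2\lambda y^{\alpha}-2y^{\kappa}}$ (the paper splits this as $e^{\lambda y^{\alpha}-\tfrac12 y^{\kappa}}$ times $y^{\beta}e^{-\tfrac12 y^{\kappa}}$, but the content is identical). For the limit, the paper phrases the argument as a density approximation---choose $\theta_0'\in\hh^{2-\kappa+\beta}$ close to $\theta_0$ in $\hh^{2-\kappa}$, use the uniform bound on the difference, and use the explicit factor $s^{\beta/\kappa}$ on the smooth piece---whereas you carry out the equivalent low/high frequency split directly on the integral; these are two presentations of the same $\epsilon/2$ mechanism.
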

\begin{proof}
Observe that
\begin{align}
&\|\es e^{-s{\Lambda}^\kappa}\theta_0\|_{\hh^{2-\kappa+\beta}}^2
= \int \left( |\xi|^{2-\kappa+\beta}\esx e^{-s|\xi|^\kappa}|(\cf \theta_0)(\xi)|\right)^2\, d\xi   \nn \\
& = \int \left( |\xi|^{2-\kappa+\beta} e^{\lambda s^{\frac{\alpha}{\kappa}}|\xi|^{\alpha}-\frac{s}{2}|\xi|^\kappa}e^{-\frac{s}{2}|\xi|^\kappa}|(\cf \theta_0)(\xi)|\right)^2\, d\xi.   \label{tolinest}
\end{align}
Now observe that 
\begin{gather} \label{intmedlin}
 \sup_{s\ge 0,\xi \in \R^2}e^{\lambda s^{\frac{\alpha}{\kappa}}|\xi|^{\alpha}-\frac{s}{2}|\xi|^\kappa}
=\sup_{s\ge 0,\xi \in \R^2}e^{\lambda (s^{\frac{1}{\kappa}}|\xi|)^{\alpha}-\frac{1}{2}(s^{\frac{1}{\kappa}}|\xi|)^\kappa} \le C(\lambda, \kappa , \alpha),
\end{gather}
since, for $\alpha < \kappa$, the function $f(x)=\lambda x^\alpha -\frac 12 x^\kappa \le C(\lambda , \alpha , \kappa)$ for all $x>0$.
Applying (\ref{intmedlin}) and (\ref{elemsemigp}) to (\ref{tolinest}), we obtain the first inequality in 
(\ref{linest}). In case $\theta_0' \in \hh^{2-\kappa+\beta}$, a  similar calculation using  (\ref{intmedlin})
 yields
\begin{gather}  \label{smoothdata}
\|\es e^{-s\Lambda^\kappa}\theta_0'\|_{\hh^{2-\kappa+\beta}} \le C\|\theta_0'\|_{\hh^{2-\kappa+\beta}}.
\end{gather}
Given $\epsilon >0$, we can choose $\theta_0'$ such that 
\begin{gather}  \label{approximate}
\|\theta_0'-\theta_0\|_{\hh^{2-\kappa}} \le \epsilon\ \mbox{and}\ \theta_0' \in \hh^{2-\kappa+\beta}.
\end{gather}
From the first inequality in (\ref{linest}), (\ref{smoothdata}) and (\ref{approximate}), the second assertion in (\ref{linest}) immeditely follows.

\end{proof}
Before embarking on the proof of Theorem \ref{thm:main}, we note that
\begin{gather*}
\|u\|_{\hh^m} \simeq \|\theta\|_{\hh^m}, \ m \in \R,
\end{gather*}
since they are related by the Riesz transform as given in (\ref{QG}).\\[2pt]

{\em  Proof of Theorem \ref{thm:main}.}
As is customary,
we consider the following approximate sequence of solutions:
\begin{gather}
\left.
\begin{array}{l}
\partial_t \theta^{(n+1)} + \Lambda^\kappa \theta^{n+1} + u^{(n)}\cdot \nabla \theta^{(n+1)} =0,
\ \theta^{(n+1)}|_{t=0}=\theta_0, \\
u^{(n)} = (-R_2\theta^{(n)},R_1\theta^{(n)}), n=0, 1, \cdots ,
\end{array}
\right\}
\end{gather}
with the convention that $\theta^{(-1)} \equiv 0$ and $u^{(-1)}=0$. Denote 
\begin{gather}  \label{tildes}
\tn(s) = \es \theta^{(n)}(s), \un(s)=\es u^{(n)}(s)=(-R_2\tn,R_1\tn);
\end{gather}
 the very last equality above is due to the fact that $R_i,\, i=1,2,$ commute with $\es $.

Due to Theorem 1 and Corollary 1 in \cite{miura}, provided either $\|\theta_0\|_{\h^{2-\kappa}}$ or $T$ is sufficiently small,
the sequence $\{\theta^{(n)}\}$ converges (in $\h^{2-\kappa}$) to a solution $\theta $ of (\ref{QG})
which additionally belongs to $C([0,T];\h^{2-\kappa})$. Moreover, for all $n$, 
$\{\theta^{(n)} \}$ satisfies 
\begin{gather}  \label{solprop}
\|\theta^{(n)}\|_{E_T}=\sup_{0<s\le T}
 s^{\frac{\beta}{\kappa}}\|\theta^{(n)}(s)\|_{\hh^{2-\kappa+\beta}} \le C\|\theta_0\|_{\hh^{2-\kappa}}
\ \mbox{and}\ \lim_{T\ra 0}\sup_n \|\theta^{(n)}\|_{E_T}=0;
\end{gather}
the constant $C$ above is independent of $T$ and $\theta_0$.
Thus,
in order to prove the Theorem \ref{thm:main},
it will be sufficient to demonstrate {\em a priori} esimates, i.e., to obtain bounds on
 $\|\tn(\cdot)\|_{E_T}$, independent of $n$.

For the remainder of the proof, we choose, and fix, the parameters $\beta, \zeta$ by
\begin{gather}   \label{paramchoice}
0< \beta < \min \left\{ \frac{\kappa}{2}, 2(\kappa-\alpha), \alpha\right\}
\ \mbox{and}\ \zeta = \alpha - \frac{\beta}{2}.
\end{gather}
Using respectively the facts that $u^{(n)}$ is divergence free and the operators
$\es \dj$ and $ \es \sj$ are Fourier multipliers (and hence commute with $\nabla$),
 we have
\begin{align}  
& \es \dj \nabla \theta^{(n)}(s) = \nabla \dj \tn(s),\ 
\es \sj \nabla \theta^{(n)} (s) = \nabla \sj \tn (s)\ \mbox{and}\ \nn \\
& \langle u^{(n)} \cdot \nabla \dj \tno , \dj \tno \rangle =0, \label{misc}
\end{align}
where $\tn, \un$ are as in \eqref{tildes}.
From (\ref{QG}) and (\ref{misc}), taking $L^2$-inner product, we readily obtain
\begin{align}
& \frac12 \frac{d}{ds}\|\dj \tno\|^2 + \|\Lambda^{\kappa/2}\dj \tno\|^2 \nn \\
& = \lambda \frac{\alpha}{\kappa} s^{\frac{\alpha}{\kappa}-1}\|\Lambda^{\alpha/2}\dj \tno\|^2 +
\l [u^{(n)},\es \dj]\nabla\theta^{(n+1)},\dj \tno\r  \nn \\
& \le C(\alpha, \kappa)\lambda^{\frac{\kappa}{\alpha}}\|\Lambda^{\kappa/2}\dj \tno\|^2
+\lambda s^{\frac{\alpha}{\kappa}-1} C(\alpha, \kappa)\|\Lambda^{\alpha/2}\dj \theta^{(n+1)}\|^2 \nn \\
& \qquad + \l [u^{(n)},\es \dj]\nabla\theta^{(n+1)},\dj \tno\r , \label{titi}
\end{align}
where to obtain the inequality (\ref{titi}), we applied \eqref{titiest}. Since $\alpha < \kappa $ and 
$C(\alpha, \kappa)$ is independent of $\lambda $, we can choose (and henceforth, fix) $ \lambda $ small enough so that $C(\alpha, \kappa)\lambda^{\frac{\kappa}{\alpha}} < \frac12$. 
Note that due to \eqref{paramchoice},   the parameters  $\beta$ and $\zeta$ satisfy the conditions
\begin{gather}  \label{forcommutest}
\min\{\beta,\zeta\}>0, \beta + \zeta < \kappa\ \mbox{and}\ \zeta < \alpha.
\end{gather}
We can now apply Theorem \ref{thm:commutest} to the commutator term on the right hand side of inequality
(\ref{titi}) with 
\begin{gather}
\delta_1 = 1-\kappa + \beta, \delta_2 = 1-\kappa+\beta, f = u^{(n)}, g=\nabla \theta^{(n+1)} ,
\end{gather}
and Bernstein's inequality (\ref{bernstein1}) to the term $ \|\Lambda^{\kappa/2}\dj \tno\|^2$ on the left hand side of (\ref{titi}), to obtain
\begin{align*}
& \ds \|\dj \tno\|^2 + C_12^{\kappa j}\|\dj \tno\|^2 \\
& \le C\left\{2^{\alpha j}s^{\frac{\alpha}{\kappa}-1} \|\dj \theta^{(n+1)}\|^2 \right. \\
& \quad \left. + c_j\left(2^{-(2-2\kappa + 2\beta)j}+s^{\frac{(\alpha-\zeta)}{\kappa}}2^{-(2-2\kappa + 2\beta+\zeta-\alpha)j}\right)
\| \tn\|_{\hh^{2-\kappa+\beta}}\| \tno\|_{\hh^{2-\kappa+\beta}}\|\dj \tno\|\right\}.
\end{align*}
Now divide both sides by $\|\dj \tno\|$ and recall that 
${\D \frac{\|\dj \theta^{(n+1)}\|}{\|\dj \tno\|}} \le 1$. This yields 
\begin{align*}
& \ds \|\dj \tno\| + C_12^{\kappa j}\|\dj \tno\| \\
& \le C\left\{s^{\frac{\alpha}{\kappa}-1} 2^{\alpha j}\|\dj \theta^{(n+1)}\|  \right. \\
& \quad \left. + c_j\left(2^{-(2-2\kappa + 2\beta)j}+s^{\frac{(\alpha-\zeta)}{\kappa}}2^{-(2-2\kappa + 2\beta+\zeta-\alpha)j}\right)
\| \tn\|_{\hh^{2-\kappa+\beta}}\| \tno\|_{\hh^{2-\kappa+\beta}}\right\}.
\end{align*}
The variation of parameters formula, and the fact that $\tno (0)=\theta^{(n+1)}(0)=\theta_0$, now yield
\begin{align}
& \|\dj \tno(t)\| \le e^{-C_12^{\kappa j}t}\|\dj \theta_0\| 
+ C\int_0^t s^{\frac{\alpha}{\kappa}-1} 2^{\alpha j}e^{-C_12^{\kappa j}(t-s)}\|\dj \theta^{(n+1)}(s)\| \, ds 
\nn \\
& + 
 C\int_0^t  c_j2^{-(2-2\kappa+2\beta) j}e^{-C_12^{\kappa j}(t-s)}\|\tn (s)\|_{\hh^{2-\kappa+\beta}}
\|\tno (s)\|_{\hh^{2-\kappa+\beta}}\, ds
\nn \\
& + C\int_0^t c_js^{\frac{(\alpha-\zeta)}{\kappa}} 2^{-(2-2\kappa+2\beta + \zeta - \alpha) j}e^{-C_12^{\kappa j}(t-s)}\|\tn (s)\|_{\hh^{2-\kappa+\beta}}
\|\tno (s)\|_{\hh^{2-\kappa+\beta}}\, ds. \label{varparam}
\end{align}
Multiply both sides of the inequality (\ref{varparam}) by $2^{(2-\kappa+\beta)j}$ and apply
(\ref{elemsemigp}). Subsequently, take the $\ell_2$-norm of the resulting sequence
and apply Minkowski inequality (to interchange $ds$ and $\sum_j$).
\comments{ Denote 
\begin{gather*}
\|\T\|_{E_T} = \sup_{0<s\le T}s^{\frac{\beta}{\kappa}}\|\T(s)\|_{\hh^{2-\kappa+\beta}}.
\end{gather*}
}
Consequently, from (\ref{varparam}), 
 the first relation in (\ref{solprop}) and the fact that $\sum c_j^2 \le 1$,  
we obtain for all $t>0$ the estimate
\begin{align}
&\|\tno (t)\|_{\hh^{2-\kappa+\beta}}
\le \widetilde{C}_1 \frac{\|\theta_0\|_{\hh^{2-\kappa}}}{t^{\frac{\beta}{\kappa}}} + 
\widetilde{C}_2\|\theta_0\|_{\hh^{2-\kappa}}\int_0^t \frac{ds}{(t-s)^{\frac{\alpha}{\kappa}}s^{1+\frac{\beta -\alpha}{\kappa}}} \nn \\
& + \widetilde{C}_3\|\tno\|_{E_T}\|\tn\|_{E_T} \left\{
\int_0^t \frac{ds}{s^{2\frac{\beta}{\kappa}}(t-s)^{1 - \frac{\beta}{\kappa}}}
+  \int_0^t \frac{ds}{s^{\frac{2\beta-\alpha+\zeta}{\kappa}}
(t-s)^{\frac{\alpha +\kappa-\zeta-\beta}{\kappa}}}\right\} , \label{penultimate} 
\end{align}
where the constants $\widetilde{C}_i$ above are independent of $n, T, t$ and $\theta_0$
as well as the sequences $\{\theta^{(n)}\}$ and $\{\tn\}$.
The integrals on the right hand side of \eqref{penultimate} are finite because 
$\alpha < \kappa$, and due to \eqref{paramchoice}, the parameters
$\beta, \zeta$ satisfy
\begin{gather}  \label{finiteintegral}
\beta < \min\{\alpha, \frac{\kappa}{2}\}, \beta < \frac{\kappa}{2}- \frac{\zeta-\alpha}{2}\ \mbox{and}\ 
\alpha < \zeta + \beta.
\end{gather}
From (\ref{penultimate}), we easily obtain
\begin{align} 
&\|\tno(\cdot)\|_{E_T}=\sup_{0<t <T}t^{\frac{\beta}{\kappa}}\|\tno (t)\|_{\hh^{2-\kappa+\beta}} \nn \\
& \le \widetilde{C}_1\|\theta_0\|_{\hh^{2-\kappa}}+ \widetilde{C}_4\|\theta_0\|_{\hh^{2-\kappa}} + 
\widetilde{C}_5\|\tno\|_{E_T}\|\tn\|_{E_T}, \label{ultimate}
\end{align}
where
\begin{align*}
& \widetilde{C}_4=t^{\frac{\beta}{\kappa}}\tilde{C}_2\int_0^t \frac{ds}{(t-s)^{\frac{\alpha}{\kappa}}s^{1+\frac{\beta -\alpha}{\kappa}}}
\quad \mbox{and}\ \\
&\widetilde{C}_5 = t^{\frac{\beta}{\kappa}}\widetilde{C}_3\left\{\int_0^t \frac{ds}{s^{2\frac{\beta}{\kappa}}(t-s)^{1 - \frac{\beta}{\kappa}}}
+  \int_0^t \frac{ds}{s^{\frac{2\beta-\alpha+\eta}{\kappa}}
(t-s)^{\frac{\alpha +\kappa-\eta-\beta}{\kappa}}}\right\}.
\end{align*}
The integrals in the definition of $\widetilde{C}_4$ and $\widetilde{C}_5$ above are finite due to (\ref{finiteintegral}).

In a similar manner, following the derivation of (\ref{ultimate}) and using (\ref{linest}),  we can also obtain
\begin{gather} \label{ultimate1}
\|\tno\|_{E_T} \le \widetilde{C}_1\|\theta_0\|_{E_T}
+ \widetilde{C}_4\|\theta_0\|_{E_T}+\widetilde{C}_5\|\tno\|_{E_T}\|\tn\|_{E_T},
\end{gather}
where $\|\theta_0\|_{E_T}$ is as defined in (\ref{caloric}).
Assume that $\widetilde{C}_5\|\tn\|_{E_T} < 1/2$. From (\ref{ultimate}), we readily obtain,
\begin{gather}  \label{finalbd}
\|\tno\|_{E_T} \le 2(\widetilde{C}_1+\widetilde{C}_4)\|\theta_0\|_{\hh^{2-\kappa}}.
\end{gather}
Provided 
\begin{gather*}
2\widetilde{C}_5(C_1+C_4)\|\theta_0\|_{\hh^{2-\kappa}} \le \frac 12, 
\end{gather*}
we see that
$\widetilde{C}_5\|\tno\|_{E} \le \frac 12$ and by induction, (\ref{finalbd}) holds for all $n$. In case
$\theta_0 \in \hh^{2-\kappa}$ is arbitrary, for sufficiently small $T$,
we can similarly derive uniform (in $n$) bound on $\|\tno\|_{E_T}$ from  (\ref{ultimate1}), by applying (\ref{linest}) and the second relation in (\ref{solprop}).

\comments{For the necessary application of the inequalities and the integrability of the above integrals, we must have
the following constraints on the parameters.
\begin{align*}
& (i)\, \beta + \eta < \kappa \ (ii)\, \eta < \alpha  \quad \mbox{(commutator inequality application restrictions)}\\
&(iii)\, \beta < \frac{\kappa}{2}\ (iv)\, \beta < \frac{\kappa}{2}- \frac{\eta-\alpha}{2}\ \mbox{and}\ (v)\, 
\alpha < \eta + \beta.
\quad \mbox{(finiteness of integral in (\ref{ultimate}))}.
\end{align*}
}
In order that the parameters satisfy (\ref{forcommutest}) and (\ref{finiteintegral}),
 it is sufficient that the conditions
\begin{gather*}
\alpha < \eta+\beta < \kappa, \eta <\alpha \ \mbox{and}\ \beta < \frac{\kappa}{2}
\end{gather*}
hold. As long as $\alpha < \kappa$, simply take $\eta = \alpha - \frac{\beta}{2}$ with
 $\beta < \frac{\kappa}{2}$. All conditions are met and we finish the proof.

 
 \end{document}